\title{}
\author{}
\newtheorem*{theorem*}{Theorem}
\newtheorem{theorem}{Theorem}
\newtheorem{lemma}{Lemma}
\newtheorem{remark}{Remark}
\DeclareMathOperator*{\argmin}{arg\,min}
\newcommand{\eqnref}[1]{\eqref{eqn:#1}}
\newcommand{\lemref}[1]{Lemma~\ref{lem:#1}}
\newcommand{\thmref}[1]{Theorem~\ref{thm:#1}}
\newcommand{\secref}[1]{Section~\ref{sec:#1}}
\newcommand{\appref}[1]{Appendix~\ref{app:#1}}
\newcommand{\R}{\mathbb{R}}
\newcommand{\f}{\mathsf{f}}
\newcommand{\g}{\mathsf{g}}
\newcommand{\inner}[2]{\langle #1 , #2 \rangle}
\newcommand{\norm}[1]{\| #1 \|}
\newcommand{\fronorm}[1]{\| #1 \|_{\textnormal{F}}}
\newcommand{\Fronorm}[1]{\left\| #1 \right\|_{\textnormal{F}}}
\newcommand{\rank}{\textnormal{rank}}
\newcommand{\Proj}{\mathcal{P}}
\newcommand{\Xh}{\widehat{X}}
\newcommand{\pr}[1]{\mathcal{P}_{{#1}}}
\newcommand{\ee}[1]{\mathbf{e}_{{#1}}}
\newcommand{\ident}[1]{\mathbf{I}_{{#1}}}
\newcommand{\ones}{\mathbf{1}}
\newcommand{\Xs}{X_\star}
\newcommand{\As}{A_\star}
\newcommand{\Bs}{B_\star}
\newcommand{\Us}{U_\star}
\newcommand{\Vs}{V_\star}
\newcommand{\Sigmas}{\Sigma_\star}
\renewcommand{\Pr}[2]{\mathcal{P}_{{#1}}\left({#2}\right)} 
\newcommand{\tPr}[2]{\widetilde{\mathcal{P}}_{{#1}}\left({#2}\right)} 
\newcommand{\Ds}{D_\star}
\newcommand{\Ss}{S_\star}
\newcommand{\Sset}{\mathcal{S}}
\newcommand{\footremember}[2]{%
    \footnote{#2}
    \newcounter{#1}
    \setcounter{#1}{\value{footnote}}%
}
\newcommand{\footrecall}[1]{%
    \footnotemark[\value{#1}]%
}
\title{An equivalence between critical points for rank constraints versus low-rank factorizations}
\author{\normalsize Wooseok Ha\footremember{B}{Department of Statistics, University of California, Berkeley},
Haoyang Liu\footremember{C}{Department of Statistics, University of Chicago},
Rina Foygel Barber\footrecall{C}}
\date{}
\begin{document}
	
	\maketitle
	
	\begin{abstract}
Two common approaches in low-rank optimization problems are either working directly with a rank constraint on the matrix variable,
or optimizing over a low-rank factorization so that the rank constraint is implicitly ensured.
In this paper, we study the natural connection between the rank-constrained and factorized approaches. 
We show that all second-order stationary points of the factorized objective function correspond to fixed points of projected gradient descent run on the original problem
(where the projection step enforces the rank constraint). This result allows us to unify many existing optimization guarantees
that have been proved specifically in either the rank-constrained or the factorized setting,
and leads to new results for certain settings of the problem. We demonstrate application of our results to several concrete low-rank optimization problems arising in matrix inverse problems.
	\end{abstract}
	
\section{Introduction}

We consider the following low rank optimization problem
\begin{equation}\label{eqn:low_rank_opt}
	\min_{X\in\R^{m\times n}} \big\{\f(X)   : \rank(X)\leq r\big\},
\end{equation}
for a differentiable function $\f:\R^{m\times n}\to \R$. Due to a wide range of applications, this type of optimization problem has been studied extensively in the past decade. 

In some special cases, the unconstrained minimizer of $\f(X)$ may already be low-rank, i.e.
\begin{equation*}\label{eqn:low_rank_unconstr}
	\Xh \in \argmin_{X\in\R^{m\times n}}\big\{\f(X)   : \rank(X)\leq r\big\} \subseteq \argmin_{X\in\R^{m\times n}} \f(X) . \end{equation*}
This setting arises naturally in the  matrix inverse problems, such as  matrix sensing \citep{recht2010guaranteed} and  matrix completion \citep{candes2009exact}, where the low-rank solution typically represents a matrix signal to recover from a fewer number of  measurements.  In these settings, while there may exist many full rank minimizers due to the nature of under-determined system,  enforcing the constraint over the course of an iterative algorithm  allows to accurately find the one with low rank \citep{oymak2018sharp}. A low-rank solution to the unconstrained minimization problem can also arise in the study of semidefinite programs (SDP)---a wide class of SDP problems\footnote{While canonical forms of SDPs involve linear constraints and do not fall within the framework of~\eqnref{low_rank_opt}, here we mainly focus on the penalized formulation of SDPs, as proposed in~\citep{bhojanapalli2018smoothed}, i.e., the linear constraints are replaced by  a quadratic penalty in the objective function---see~\secref{related_penSDP}. }  admit low rank solution that are global optimal (e.g., \citet{bhojanapalli2018smoothed}).  While SDP problems are convex and can be solved by convex optimization algorithms, restricting the search space via rank constraint may still be useful in speeding up the algorithm~\citep{burer2003nonlinear}. 

In other settings, the rank constraint $\rank(X)\leq r$ will be active in the solution to the minimization problem~\eqnref{low_rank_opt}, meaning
that the unconstrained minimizer will no longer be low rank and we must necessarily work with the rank constraint in the optimization.
In this case, two of the most common optimization strategies in the literature are: either working with the full variable $X\in\R^{m\times n}$ while enforcing $\rank(X)\leq r$ (e.g., by projecting to this constraint
after each iteration), or reformulating the problem in terms of a factorization
$X=AB^\top$ with $A\in\R^{m\times r}$ and $B\in\R^{n\times r}$, so that the factorization ensures the rank constraint. (Riemannian optimization~\citep{absil2009optimization,vandereycken2013low,mishra2013low} is another well-studied approach to optimizaiton under rank constraints which we do not consider in this work. There is also extensive literature on relaxing rank constraint to a convex penalty or constraint, such as the nuclear norm \citep{recht2010guaranteed}, but
here we will focus on optimization techniques that work with the original rank constraint rather than a relaxation.)

Working either with $X$ or with a factorization, we can
implement various optimization  methods  to attempt to find the solution to~\eqnref{low_rank_opt}. When working with the full variable, a standard approach is to treat the rank-constrained set as a subset of the Euclidean space $\R^{m\times n}$, and apply constrained optimization algorithms.
As our central example of this work, we  consider the projected gradient descent method (also known as iterative hard thresholding, see \citet{jain2014iterative}):
\begin{equation}\label{eqn:PGD_intro}X \leftarrow \Proj_r\big(X - \eta\nabla\f(X)\big),\end{equation}
where $\Proj_r(\cdot)$ denotes projection to the rank-$r$ constraint (calculated by taking the top $r$ components of a singular value decomposition).
On the other hand, if we work instead in the factorized setting, we would aim to solve
\begin{equation}\label{eqn:factored_intro}\min_{A\in\R^{m\times r},B\in\R^{n\times r}}\f(AB^\top).\end{equation}
For instance, we might apply any unconstrained optimization techniques to this minimization, which attempt to update  each of the two factors $A, B$. 
In contrast to the full-dimensional approach, these methods implicitly explore the space of low rank matrix manifold  embedded in $\R^{m\times n}$.

Comparing these options naturally raises the following question: is there a connection between the output of full-dimensional approaches such as PGD~\eqnref{PGD_intro} versus
factorized approaches aiming to solve~\eqnref{factored_intro}?
Our work is intended to partially answer this question, and further highlighting the implication of this result to a range of low rank estimation problems.

\subsection{Comparing full-dimensional vs factorized approaches} 

In this work we strengthen the connection between solving the rank-constrained optimization problem via its factorized representation~\eqnref{factored_intro} versus projecting directly to the constraint~\eqnref{PGD_intro}. 
Our key finding is that these two approaches, treated more or less separately in the literature, can in fact be considered to be equivalent for a wide class of low-rank optimization problems, and thus lead to the same guarantees in a range of settings.
Specifically we can state our main result as follows:
\begin{quote}
	Any second-order stationary point (SOSP) of the factorized objective function $\g(A,B) = \f(AB^\top)$, must also be a fixed point of projected gradient descent on the original objective function $\f(X)$.
\end{quote}
\noindent Based on this finding, we further verify the following results:
\begin{itemize}
	\item In \secref{conv}, under conditions of restricted strong convexity/smoothness on  $\f$, we give a range of different
	optimality guarantees for SOSPs of the factorized objective function. Here the strength of the guarantee (e.g., global or local
	optimality) varies depending on the strength of our assumptions on problem.
	\item In \secref{appl}, we specialize these optimality guarantees to several concrete matrix inverse problems arising in low-rank signal recovery, such as matrix sensing, matrix completion, and robust PCA. 
\end{itemize}
As we will see, these results directly follow from our main equivalence result, in combination with some properties of fixed points of PGD~\eqnref{PGD_intro}. 
It is not the aim of this work to provide novel guarantees for estimation and convergence of these various problems---and indeed,
some of these guarantees are already known in the literature, although in other cases new guarantees arise as a byproduct of our main results.
Rather, we aim to bring in a new perspective and broaden our understanding on the landscape of nonconvex low-rank minimization problems through our equivalence result.

\subsection{Notation}
Throughout the paper, $\f:\R^{m\times n}\rightarrow\R$ is a twice-differentiable objective function. Its gradient $\nabla \f(X)$ is represented as a matrix in $\R^{m\times n}$ while its second derivative $\nabla^2\f(X):\R^{m\times n}\times \R^{m\times n}\rightarrow\R$ will be written as a quadratic form, i.e., $\nabla^2\f(X)\big(X_1,X_2\big)$.

We will work also with $\g(A,B) = \f(AB^\top)$, the function defining the factorized problem. Writing $\g:\R^{m\times r}\times \R^{n\times r}\rightarrow \R$, the first derivative $\nabla\g(A,B) = \big(\nabla_{\!A\,}\g(A,B),\nabla_{\!B\,}\g(A,B)\big)$ lies in $\R^{m\times r}\times \R^{n\times r}$, while the second derivative $\nabla^2\g(A,B)$ is a  quadratic form mapping from $\big(\R^{m\times r}\times \R^{n\times r}\big)\times\big(\R^{m\times r}\times \R^{n\times r}\big)$ to $\R$.

For a matrix $X$, we write, respectively, $\fronorm{X}$ and $\norm{X}$ to denote the Frobenius norm and the spectral norm, while  $\norm{X}_{2,\infty}$ will be denoted as the largest $\ell_2$ norm of any row. The $\ell_0$ norm, $\norm{\cdot}_0$, will denote the number of nonzero entries in a vector.
If $\rank(X)\leq r$, we will write
$X=U_X\cdot \textnormal{diag}\{\sigma_1,\dots,\sigma_r\}\cdot V_X^\top$ to denote a (possibly non-unique) singular value decomposition
of $X$, with $\sigma_1\geq \dots\geq \sigma_r$.

\section{Main result}\label{sec:main_result}
We now turn to our main result, relating critical points of factorized optimization of $\g(A,B)=\f(AB^\top)$ to 
the fixed points of PGD on the full-dimensional problem $\f(X)$.
Before proceeding, we need one additional piece of notation
that allow us to quantify the smoothness of $\f$ on the space of 
low-rank matrices:
\begin{equation}\label{eqn:local_smth}\beta_{\mathsf{local}}(X) = \lim_{\epsilon\rightarrow 0}\left\{\sup_{\substack{0<\fronorm{Y-X}\leq \epsilon\\\rank(Y)\leq r}} \frac{\f(Y) - \f(X) - \inner{\nabla\f(X)}{Y-X}}{\frac{1}{2}\fronorm{X-Y}^2}\right\}.\end{equation}
Note that, if $\f$ is twice differentiable, then $\beta_{\mathsf{local}}(X) \leq \norm{\nabla^2\f(X)}$.

This local curvature measure will relate to the step size of PGD, since the step size for PGD is typically
chosen with respect to the curvature of $\f$---in particular, if the second derivative of $\f$ is globally
bounded by some $\beta$, then a constant step size $\eta \leq 1/\beta$ ensures that each step of PGD
will make progress towards minimizing $\f$.

\subsection{Preliminaries: characterizing critical points}\label{sec:main_prelim}
We begin by characterizing a critical point (CP) or fixed point for each of the relevant
representations and algorithms.

\subsubsection{Critical points, fixed points, and local minima of rank-constrained minimization}

First consider the rank-constrained minimization problem~\eqnref{low_rank_opt} over the full-dimensional matrix variable $X$.
For a matrix $X$ with $\rank(X)\leq r$,
\begin{equation}\label{eqn:CP_fulldim}
	\textnormal{$X$ is a CP of~\eqnref{low_rank_opt} 
		iff }\begin{cases}\textnormal{$\rank(X)=r$, $\nabla\f(X)^\top U_X = 0$, and $\nabla\f(X) V_X = 0$, or}\\
		\textnormal{$\rank(X)<r$ and $\nabla\f(X)=0$.}\end{cases}\end{equation}
These conditions are necessary for local optimality \citet[Theorem 6.12]{rockafellar2009variational}---that is, any local minimum $X$
for the function $\f(X)$ must satisfy~\eqref{eqn:CP_fulldim}---but in general are not sufficient. In particular, 
we can verify the following stronger property necessary for local optimality:
\begin{lemma}\label{lem:localmin_vs_fixedpt}
	Suppose that $X$ is a local minimum of the rank-constrained optimization problem~\eqref{eqn:low_rank_opt}. Then,
	in addition to the first-order conditions~\eqref{eqn:CP_fulldim}, the gradient $\nabla\f(X)$ satisfies
	\begin{equation}\label{eqn:localmin_gradbound}\norm{\nabla\f(X)}\leq \beta_{\mathsf{local}}(X) \cdot \sigma_r,\end{equation}
	where $\sigma_r$ is the $r$-th singular value of $X$.
\end{lemma}

Next we turn to the PGD algorithm in particular, and characterize its fixed points. 
Recall that the PGD algorithm has update steps of the form
\[X_{t+1} \leftarrow \pr{r}\big(X_t - \eta \nabla \f(X_t)\big),\]
where $\eta>0$ is the step size, while $\pr{r}$ denotes (possibly non-unique) projection to the rank constraint, i.e., $\pr{r}(X) \in \arg\min_{\rank(X')\leq r}\fronorm{X'-X}$.

A matrix $X\in\R^{m\times n}$ is therefore a {\em fixed point} of PGD at step size $\eta>0$ if it satisfies\footnote{If the projection step
	is not unique, we need to be more precise with our definition. 
	We say that $X$ is a fixed point of PGD at step size $\eta$ if $X$ is equal to a (possibly non-unique) solution
	of the projection step, i.e., $X\in\arg\min_{\rank(X')\leq r}\fronorm{X' - \big(X - \eta\nabla\f(X)\big)}$.}
\[X =\pr{r}\big(X - \eta\nabla\f(X)\big).\]
By examining this condition, we can easily determine that $X$ is a fixed point of PGD if and only if
\begin{equation}\label{eqn:PGD_SP}
	\nabla \f(X)^\top U_X = 0\textnormal{ and }\nabla \f(X) V_X = 0\textnormal{ and } \eta \norm{\nabla \f(X)} \leq \sigma_r.\end{equation}
Comparing to the result of \lemref{localmin_vs_fixedpt} and the critical point conditions~\eqref{eqn:CP_fulldim}, we see that this implies
{\footnotesize
	\begin{equation*}\label{eqn:main_result}
		\left\{\begin{tabular}{@{}c@{}}
			\textnormal{Local minima}\\\textnormal{of \!\!\!\!$\displaystyle\min_{\rank(X)\leq r}\!\!\f(X)$}\end{tabular}\right\}\subseteq 
		\left\{\begin{tabular}{@{}c@{}}
			\textnormal{Fixed pts.~of PGD}\\\textnormal{on $\displaystyle\min_{\rank(X)\leq r}\!\!\f(X)$}\\\textnormal{with $\eta\leq 1/\beta_{\mathsf{local}}$}\end{tabular}\right\}\subseteq 
		\left\{\begin{tabular}{@{}c@{}}
			\textnormal{Critical pts.}\\\textnormal{of \!\!\!\!$\displaystyle\min_{\rank(X)\leq r}\!\!\f(X)$}\end{tabular}\right\}.\end{equation*}
}

\subsubsection{Critical points of factorized minimization}
Next, we will consider the critical points of the factorized objective function $\g(A,B)$, 
defined over the variables $A\in\R^{m\times r}$ and $B\in\R^{n\times r}$ (with no constraints on these variables).
A first-order stationary point (FOSP), or critical point, of $\g$ is any pair $(A,B)$ with $\nabla \g(A,B)=(\nabla_A \g(A,B), \nabla_B \g(A,B))=0$. By definition of $\g$, we can calculate
\[\nabla_{\!A\,}\g(A,B) = \nabla\f(AB^\top) B\textnormal{ and }\nabla_{\!B\,}\g(A,B) = \nabla\f(AB^\top)^\top A,\]
and therefore,
\begin{multline}\label{eqn:g_FOSP}
	\textnormal{The pair $(A,B)$ is a FOSP of $\g$ iff $\nabla\g(A,B)=0$,}\\\textnormal{or equivalently, $\nabla\f(AB^\top)^\top A= 0$ and $\nabla \f(AB^\top) B = 0$.}\end{multline}
Comparing to the first-order optimality conditions for the original (full-dimensional) objective function $\f(X)$, given in~\eqnref{CP_fulldim},
we obtain the following result (which requires no proof):
\begin{lemma}\label{lem:FOSP_compare}
	Let $(A,B)\in\R^{m\times r}\times \R^{n\times r}$. If $X=AB^\top$ 
	is a critical point of $\min_{\rank(X)\leq r}\f(X)$, then
	the pair $(A,B)$ is a FOSP of the  factorized objective function $\g(A,B)$.
\end{lemma}
However, we cannot hope for the converse to be true, since FOSPs of $\g$ can exhibit some counterintuitive behavior that does not arise
in the full-dimensional problem. A well-known example is the pair $(A,B)=(\mathbf{0}_{m\times r},\mathbf{0}_{n\times r})$. This point is always
a FOSP of the factorized problem, but in general $X=\mathbf{0}_{m\times n}$ does not correspond to a critical point of $\f(X)$
(and indeed, will be far from optimal). From this trivial example, we see that considering only the first-order conditions of $\g$ is not sufficient to understand
the correspondence between the full-dimensional and the factorized forms of the problem.
We will therefore next consider second-order stationary points (SOSPs), or critical points, of the factorized problem, which are characterized by the conditions
\begin{equation}\label{eqn:g_SOSP}\nabla \g(A,B)=0 \textnormal{ and } \nabla^2\g(A,B)\succeq 0. \end{equation}

\subsection{Characterization of SOSP for factorized problem}

From the discussion above, we see clearly that any fixed point of the PGD is first-order stationary point (FOSP) of the factorized objective function. Our main theoretical result establishes a partial converse to this,
proving that any second-order stationary point (SOSP) of the factorized objective function $\g(A,B)$ must also be 
a fixed point of projected gradient descent on the original function $\f(X)$.
\begin{theorem}\label{thm:main}Assume that $\f$ is twice differentiable,
	and let $(A,B)\in\R^{m\times r}\times \R^{n\times r}$.
	\begin{enumerate}
		\item[(a)] If $(A,B)$ is a SOSP of the factorized objective function $\g(A,B)$,
		then $X=AB^\top$ is a fixed point of the projected gradient
		descent algorithm on $\min_{\rank(X)\leq r}\f(X)$ with any step size $\eta\leq 1/\beta_{\mathsf{local}}(X)$. 
		\item[(b)]
		Conversely, if $(A,B)$ is not a SOSP of $\g$, then $X=AB^\top$ is not a local minimum of $\min_{\rank(X)\leq r}\f(X)$.
	\end{enumerate}
\end{theorem}
\noindent To summarize, our main result (combined with the discussion of \secref{main_prelim}) shows that,
for the case of a twice-differentiable function $\f$, we have:
{\footnotesize
	\begin{equation*}
		\left\{\begin{tabular}{@{}c@{}}
			\textnormal{Local minima}\\\textnormal{of \!\!\!\!$\displaystyle\min_{\rank(X)\leq r}\!\!\f(X)$}\end{tabular}\right\}\subseteq 
		\left\{\begin{tabular}{@{}c@{}}
			\textnormal{$AB^\top$ for SOSPs}\\\textnormal{$(A,B)$ of $\g(A,B)$}\end{tabular}\right\}\subseteq 
		\left\{\begin{tabular}{@{}c@{}}
			\textnormal{Fixed pts.~of PGD}\\\textnormal{on $\displaystyle\min_{\rank(X)\leq r}\!\!\f(X)$}\\\textnormal{with $\eta\leq 1/\beta_{\mathsf{local}}$}\end{tabular}\right\}\subseteq 
		\left\{\begin{tabular}{@{}c@{}}
			\textnormal{Critical pts.}\\\textnormal{of \!\!\!\!$\displaystyle\min_{\rank(X)\leq r}\!\!\f(X)$}\end{tabular}\right\}\subseteq 
		\left\{\begin{tabular}{@{}c@{}}
			\textnormal{$AB^\top$ for FOSPs}\\\textnormal{$(A,B)$ of $\g(A,B)$}\end{tabular}\right\}.\end{equation*}
}

\subsubsection{Regularized factored optimization}\label{sec:reg_obj}
The factors $A$ and $B$ are not identifiable in the factored optimization problem---in particular, $\g(A,B) = \g(AC,BC^{-1})$ for any invertible $C\in\R^{r\times r}$.
While the product $X=AB^\top$ is in principle not affected by the nonidentifiability of the individual factors, 
it is known that this issue may lead to instability and numerical issues when solving the factorized minimization problem~\eqnref{factored_intro}.
To alleviate this, it is common to add a regularizer on $A$ and $B$  to align the two factors on the same scale (e.g., \citet{tu2015low,Zheng2016Convergence,zhu2018global}). The regularized objective function is
\begin{equation}\label{eqn:reg_factored_loss}\g_{\mathsf{reg}}(A,B)=\g(A,B) + \frac{\lambda }{2}\fronorm{A^\top A - B^\top B}^2, \end{equation}
for a regularization parameter $\lambda>0$. In fact, we can verify that our main result, \thmref{main}, applies in this setting as well.
\begin{lemma}\label{lem:reg_obj}For any $\lambda>0$,
	the result of \thmref{main}(a)
	holds with $\g_{\mathsf{reg}}$ in place of $\g$.  Furthermore, a modification of \thmref{main}(b) holds:
	if $X$ is a local minimum of $\min_{\rank(X)\leq r}\f(X)$, then there exists a factorization $X=AB^\top$ such that $(A,B)$ is a SOSP of $\g_{\mathsf{reg}}$.
\end{lemma}

\subsection{Proof of \thmref{main}}\label{sec:proof_main}
By definition of $\g$, some simple calculations show that $\nabla^2\g(A,B)$ maps $(A_1,B_1) \times (A_1,B_1) $ to
\begin{equation}\label{eqn:g_2nd_deriv}
	2\inner{\nabla \f(X)}{A_1B_1}+ \nabla^2\f(X)\Big(AB_1^\top + A_1B^\top, AB_1^\top + A_1B^\top\Big).\end{equation}

\subsubsection{Claim (a): a SOSP is a fixed point of PGD}
Since we assume that $\nabla^2\g(A,B)\succeq0$ by definition of a SOSP, the calculation in~\eqnref{g_2nd_deriv}
implies that
\begin{equation}\label{eqn:second_deriv_g_psd}
	2\inner{\nabla \f(X)}{A_1B_1^\top}+ \nabla^2\f(X)\Big(AB_1^\top + A_1B^\top, AB_1^\top + A_1B^\top\Big)\geq 0\textnormal{ for all $(A_1,B_1)$}.\end{equation}
By first-order optimality conditions at $(A,B)$ we additionally know that
\begin{equation}\label{eqn:first_deriv_g_zero}
	\nabla\f(X)^\top A = 0\textnormal{ and }\nabla\f(X)B = 0.\end{equation}

Next, let $X=U_X\cdot \textnormal{diag}\{\sigma_1,\dots,\sigma_r\}\cdot V_X^\top$ be a singular value decomposition
of $X$, with $\sigma_1\geq \dots\geq \sigma_r$. Let $u_\star\in\R^m$ and $v_\star\in\R^n$ be the top singular vectors of the gradient $\nabla \f(X)\in\R^{m\times n}$, so that
$\norm{\nabla \f(X)}= u_\star^\top \nabla \f(X) v_\star$.
We will now split into two cases, $\rank(X)=r$ and $\rank(X)<r$.

\paragraph{Case 1: full rank} First suppose  $\rank(X)=r$. Let $u_r$ and $v_r$ be the last left and right singular vectors of $X$, respectively.
Since $X=AB^\top$ has rank $r$, this means that $U_X$ and $A$ span the same column space, and similarly $V_X$ and $B$ span the same column space.
Together with the first-order optimality conditions in~\eqnref{first_deriv_g_zero},
this implies that $\nabla\f(X) V_X=0$ and $\nabla \f(X)^\top U_X=0$.
By our earlier characterization~\eqnref{PGD_SP} of the fixed points of PGD, we therefore only need to check
that $\eta\norm{\nabla \f(X)}\leq \sigma_r(X)$ in order to verify that $X$ is a fixed point of PGD at step size $\eta$. 

Next, if $\nabla\f(X)=0$ then $X$ is obviously a fixed point, so from this point on we will consider the case that $\nabla\f(X)\neq 0$.
Since we know that $\nabla \f(X)^\top U_X =0$ while $u_\star$ is the first left singular vector of $\nabla\f(X)$,
this implies that $u_r^\top u_\star=0$. Similarly $v_r^\top v_\star=0$.
We will consider the curvature of the factorized objective function $\g(A,B)$ in the direction given by
$(A_1,B_1) = \big(-u_\star u_r^\top A,v_\star  v_r^\top B\big)$.
Plugging this choice into our earlier calculation~\eqnref{second_deriv_g_psd} we see that
\begin{multline*}\nabla^2\f(X)\Big(AB_1^\top + A_1B^\top, AB_1^\top + A_1B^\top\Big) \geq -2\inner{\nabla \f(X)}{A_1B_1^\top}\\
	=  2\inner{\nabla \f(X)}{u_\star u_r^\top AB^\top v_r v_\star ^\top} = 2\sigma_r \norm{\nabla \f(X)},\end{multline*}
where the last step holds since $u_r,v_r$ are the $r$th singular vectors of $X=AB^\top$.

Next, we will use the following lemma (proved in \appref{proofs}):
\begin{lemma}\label{lem:RSM_Hessian}
	Let $\f:\R^{m\times n}\rightarrow\R$ be twice-differentiable at $X=AB^\top$, where $A\in\R^{m\times r}$ and $B\in\R^{n\times r}$.
	Then, for any matrices $A_1\in\R^{m\times r},B_1\in\R^{n\times r}$,
	\[\nabla^2\f(X)\big(AB_1^\top + A_1B^\top, AB_1^\top + A_1B^\top\big) \leq \beta_{\mathsf{local}}(X) \cdot \fronorm{AB_1^\top + A_1B^\top}^2.\]
\end{lemma}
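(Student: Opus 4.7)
The plan is to use the definition of $\beta_\f(X)$, which only quantifies curvature along rank-$\leq r$ perturbations, by constructing a rank-$\leq r$ curve through $X$ whose leading-order behavior in a small parameter $t$ matches the direction $H := AB_1^\top + A_1B^\top$. The natural candidate is the bilinear curve
\[
Y(t) = (A+tA_1)(B+tB_1)^\top = X + tH + t^2\, A_1B_1^\top,
\]
which satisfies $\rank(Y(t))\leq r$ for every $t\in\R$ and agrees with the straight line $X+tH$ to first order, with only a harmless $O(t^2)$ correction lying on the low-rank variety.

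First I would apply the definition of $\beta_\f(X)$ to $Y=Y(t)$: for any $\delta>0$ and all sufficiently small $t\neq 0$ (so that $0<\fronorm{Y(t)-X}$ is small enough),
\[
\f(Y(t)) - \f(X) - \inner{\nabla\f(X)}{Y(t)-X} \leq \tfrac{1}{2}\bigl(\beta_\f(X)+\delta\bigr)\fronorm{Y(t)-X}^2.
\]
Then I would Taylor-expand both sides in $t$. On the right, $\fronorm{Y(t)-X}^2 = t^2\fronorm{H}^2 + O(t^3)$. On the left, the standard second-order Taylor expansion combined with bilinearity of $\nabla^2\f(X)$ gives
\[
\f(Y(t)) - \f(X) - \inner{\nabla\f(X)}{Y(t)-X} = \tfrac{t^2}{2}\nabla^2\f(X)(H,H) + o(t^2),
\]
because the $t^2\, A_1B_1^\top$ term contributes only cross terms of order $t^3$ and a self-term of order $t^4$ inside the Hessian form. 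Dividing through by $t^2/2$, sending $t\to 0$, and finally letting $\delta\to 0$ yields the stated bound.

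The only step that really needs care is verifying that the $t^2\, A_1B_1^\top$ piece in $Y(t)-X$ is absorbed into lower-order error on both sides rather than contaminating the leading $t^2$ coefficient. On the Hessian side this is immediate from bilinearity of $\nabla^2\f(X)$, and on the Frobenius side it follows from directly expanding $\fronorm{tH + t^2\, A_1B_1^\top}^2$. The degenerate case $H=0$ makes the inequality trivial; otherwise $Y(t)\neq X$ for all sufficiently small nonzero $t$, so the supremum in the definition of $\beta_\f(X)$ genuinely applies along this curve.
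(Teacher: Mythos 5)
Your proposal is correct and follows essentially the same route as the paper's proof: both use the curve $Y(t) = (A+tA_1)(B+tB_1)^\top$, apply the definition of $\beta_{\f}(X)$ along it, Taylor-expand, and observe that the $t^2 A_1B_1^\top$ correction only contributes at order $t^3$ or higher, with the degenerate case $AB_1^\top + A_1B^\top = 0$ handled trivially. No substantive differences.
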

Now fix any step size $\eta>0$ with $\eta\leq 1/\beta_{\mathsf{local}}(X)$. Then by \lemref{RSM_Hessian}, along with the definitions of $A_1$ and $B_1$, we can bound
\begin{multline*}
	\nabla^2\f(X)\big(AB_1^\top + A_1B^\top, AB_1^\top + A_1B^\top\big)
	\leq \eta^{-1} \cdot \fronorm{AB_1^\top + A_1B^\top}^2\\
	= \eta^{-1}\cdot \fronorm{AB^\top v_r v_\star^\top - u_\star u_r^\top AB^\top}^2= \eta^{-1}\cdot \sigma_r(X)^2\fronorm{u_r v_\star^\top - u_\star v_r^\top }^2 = 2\eta^{-1}\sigma_r^2,
\end{multline*}
where the next-to-last step holds since $u_r,v_r$ are the $r$th singular vectors of $X=AB^\top$, while the last step holds since $u_r,u_\star$ and $v_r,v_\star$
are pairs of orthogonal unit vectors.
Combining everything, and using the fact that $\sigma_r>0$ since $\rank(X)=r$, we have proved that
\[\eta\norm{\nabla\f(X)}\leq \sigma_r.\]
Applying~\eqnref{PGD_SP}, this verifies that $X$ is a fixed point of PGD with step size $\eta$, which completes the proof for the rank-$r$ case.

\paragraph{Case 2: rank deficient} For the case that $\rank(X)<r$, our proof closely follows that of~\citet[Lemma1]{bhojanapalli2018smoothed}, extending
their result to the asymmetric case (their work assumes $X\succeq 0$ and works with the symmetric factorization $X=AA^\top$).

First, since $A\in\R^{m\times r}$ and $B\in\R^{n\times r}$, if the product $X=AB^\top$ has rank $<r$ then it cannot be the case that both $A$ and $B$
are full rank. Without loss of generality suppose $\rank(A)<r$.
This means that there is some unit vector $w\in\R^r$ with $Aw=0$. Now consider $(A_1,B_1) = (-u_\star w^\top,c\cdot v_\star w^\top)$ for any $c>0$. 
Since $(A,B)$ is a SOSP of the factorized problem, our earlier calculation~\eqnref{second_deriv_g_psd} yields
\[2\inner{\nabla \f(X)}{-c \cdot u_\star w^\top wv_\star^\top}+ \nabla^2\f(X)\Big(c\cdot Aw v_\star^\top + u_\star w^\top B^\top, c\cdot Aw v_\star^\top + u_\star w^\top B^\top\Big)\geq 0.\]
Since $\norm{w}_2=1$ while $u_\star^\top \nabla \f(X) v_\star = \norm{\nabla \f(X)}$, and $Aw=0$ by definition of $w$, we can simplify this to
\[ \nabla^2\f(X)\Big(u_\star w^\top B^\top,u_\star w^\top B^\top\Big)\geq 2c\norm{\nabla \f(X)}.\]
Now, $c>0$ is arbitrary, and so this holds for any $c>0$. On the other hand, since $\f$ is twice-differentiable, the left-hand side must be finite.
This implies that $\norm{\nabla \f(X)}=0$, i.e., $\nabla \f(X)=0$.
Therefore clearly $X$ is a fixed point of projected gradient descent at any step size $\eta$.

\subsubsection{Claim (b): a local minimum is a SOSP}
The second claim follows from a simple Taylor series argument.
Suppose that $X=AB^\top$ is a local minimum of $\min_{\rank(X)\leq r}\f(X)$.
The work in \secref{main_prelim} implies that $(A,B)$ is therefore a FOSP of $\g(A,B)$, that is, $\nabla\g(A,B)=0$.
We therefore only need to verify that $\nabla^2\g(A,B)\succeq 0$ to prove that $(A,B)$ is a SOSP.
Fix any $(A_1,B_1)\in\R^{m\times r}\times\R^{n\times r}$ and let $\delta>0$. Define
\[X_{\delta} = (A + \delta A_1)(B+\delta B_1)^\top.\]
Since $X$ is a local minimum, this implies that $\f(X_{\delta})\geq \f(X)$ for all sufficiently small $\delta>0$.
Next, taking a Taylor expansion,
\begin{align*} 0 &\leq \frac{\f(X_{\delta}) - \f(X)}{\delta^2} = \frac{\inner{\nabla\f(X)}{X_{\delta} - X}}{\delta^2} + \frac{1}{2\delta^2}\nabla^2\f(X)\big(X_{\delta} - X, X_{\delta}-X\big) + \mathcal{O}(\delta)\\
	&=\inner{\nabla\f(X)}{\frac{ AB_1^\top +  A_1B^\top}{\delta} +A_1B_1^\top} + \frac{1}{2}\nabla^2\f(X)\big( AB_1^\top + A_1B^\top,  AB_1^\top +  A_1B^\top\big) + \mathcal{O}(\delta)\\
	&= \inner{\nabla\f(X)}{A_1B_1^\top} + \frac{1}{2}\nabla^2\f(X)\big( AB_1^\top + A_1B^\top,  AB_1^\top +  A_1B^\top\big) + \mathcal{O}(\delta)\\
	&=\nabla^2\g(A,B)\Big((A_1,B_1),(A_1,B_1)\Big) + \mathcal{O}(\delta),
\end{align*}
where the last step applies~\eqnref{g_2nd_deriv}, while the next-to-last step holds since $\nabla\f(X)^\top A = 0$ and $\nabla\f(X) B=0$ due to the fact that $(A,B)$ is a FOSP~\eqnref{g_FOSP}.
Since this bound holds for all sufficiently small $\delta>0$, taking a limit we see that $(A,B)$ is a SOSP.

\subsection{Comparison to related work: penalized SDPs}\label{sec:related_penSDP}

The comparison of full-dimensional approaches versus factorized approaches has been studied in the context of semidefinite programs. 
The existing work closest to the results of our paper is the ``penalized'' form of SDPs~\citep{bhojanapalli2018smoothed}
\[\Xh\in\argmin_{X\in\R^{n\times n},X\succeq 0}\big\{\f_0(X) +\mu \sum_{i=1}^{k} (\f_i(X) - a_i)^2\big\},\]
where $\f_0,\f_1,\dots,\f_k$ are all {\em linear} functions. The factorized form of this problem is given by writing $X=AA^\top$, and solving
\[\min_{A\in\R^{n\times r}}\big\{f_0(AA^\top) +\mu \sum_{i=1}^{k} (\f_i(AA^\top) - a_i)^2\big\}.\]
If we take $r=n$, then the global minimizer of this problem coincides with that of the full SDP---and in fact, this holds as long as $r\geq \rank(\Xh)$.
On the other hand, the factorized problem is  nonconvex so finding the global minimum may be challenging. Remarkably, \citet{bhojanapalli2018smoothed} (building
on the earlier work of \citet{burer2003nonlinear}) show that taking $r\sim \sqrt{k}$ is sufficient to ensure that any second-order stationary point (SOSP) of the factorized
problem is a global minimizer of the full SDP (if one exists); it is also shown that approximate SOSPs are approximately globally optimal. Of course, for $A\in\R^{n\times r}$ to achieve the global minimum at $r\sim \sqrt{k}$, this means that
the global minimizer $\Xh$ itself must have rank on the order of $\sqrt{k}$.

Summarizing, the results mentioned above apply in the setting where:
\begin{itemize}
	\item The optimization problem is a (penalized) SDP, meaning that the functions $\f_0,\f_1,\dots,\f_k$  are linear and the factorized form is given by $X=AA^\top$,
	\item The unconstrained global minimizer $\Xh$ is rank-deficient (without imposing a rank constraint),
	\item Results apply to finding the global minimum.
\end{itemize}
In contrast, in our work, we will allow:
\begin{itemize}
	\item The objective function is any twice-differentiable function $\f(X)$, and $X$ is not necessarily symmetric, i.e., the factorized form is given by $X=AB^\top$,
	\item The unconstrained global minimizer, $\argmin_X\f(X)$, may be full rank in general---in the rank-constrained problem, $\Xh\in\argmin_X\{\f(X):\rank(X)\leq r\}$, the rank constraint may be active,
	\item Results no longer apply to finding the global minimum (since this is NP-hard), but instead we study fixed points.
\end{itemize}
In particular, if a fixed point of the rank-constrained approach is itself a global minimizer, our main result can be made globally, i.e., any SOSP of~\eqnref{factored_intro} is also global optimal. 
In the special case that the problem is a SDP and the SOSP is rank-deficient, i.e., rank strictly less than $r$, this result reduces to the known global optimality result proved by~\citet{bhojanapalli2018smoothed}.\footnote{More precisely, the global solution to the full SDP may not exist in general and \citep{bhojanapalli2018smoothed} proves the result when it exists. To ensure existence of the solution, additional conditions on the linear functions $\f_0,\f_1,\dots,\f_k$ are required; see~\citep{bhojanapalli2018smoothed} for further details. }

\section{Convergence guarantees}\label{sec:conv}

In this section, we investigate the implications of our main result~\thmref{main} on the landscape of the factorized problem~\eqnref{factored_intro}. 
We are interested in determining settings where factorized optimization methods can be expected to achieve
optimality guarantees. Depending on the structure of the objective function $\f$ and other assumptions in the problem, we will
see wide variation in the types of guarantees that can be obtained for the output $\Xh$ of a particular algorithm. From strongest to weakest,
the three main styles of guarantees that appear in the literature are:
\begin{itemize}
	\item Global optimality: the algorithm converges to a global minimizer.
	\item Local optimality, or basin of attraction: if initialized near a global minimizer, then the algorithm converges to that global minimizer.
	\item Restricted optimality: the algorithm converges to a matrix $X$ that satisfies $\f(X)\leq \f(X')$ for any rank-$r'$ matrix $X'$, where $r'<r$ is a strictly lower rank constraint.
\end{itemize}
To simplify our comparison of these three styles of guarantees, we will consider  the setting where the original objective function $\f$ satisfies $\alpha$-{\em restricted strong convexity} (abbreviated as $\alpha$-RSC) with respect to the rank constraint $r$ (\citet{negahban2012unified,agarwal2010fast}), meaning that for all $X,Y\in\R^{m\times n}$ with $\rank(X),\rank(Y)\leq r$,
\begin{equation}\label{eqn:RSC}
	\f(Y)\geq \f(X) + \inner{\nabla \f(X)}{Y-X} + \frac{\alpha}{2}\fronorm{X-Y}^2.
\end{equation}
Similarly, we assume that $\f$ satisfies $\beta$-{\em restricted smoothness}
with parameter $\beta$ (abbreviated as $\beta$-RSM) with respect to the rank constraint $r$, 
meaning that for all $X,Y$ with $\rank(X),\rank(Y)\leq r$, 
\begin{equation}\label{eqn:RSM}
	\f(Y)\leq \f(X) + \inner{\nabla \f(X)}{Y-X} + \frac{\beta}{2}\fronorm{X-Y}^2.
\end{equation}

Throughout this section, we will always write $\kappa = \beta/\alpha$ to denote the rank-restricted condition number of $\f$.
Note that $\kappa\geq 1$ always. We will consider two different regimes for the condition number $\kappa$:
\[\textnormal{Near-isometry ($\kappa \approx 1$) \quad vs. \quad Arbitrary conditioning ($\kappa \gg 1$)}.\]
We can expect to see $\kappa\approx 1$ in certain well-behaved problems, for instance the matrix sensing problem, where
$\f(X)$ represents matching $X$ with random linear measurements of the form $\inner{A_i}{X}$, where e.g., the measurement
matrices $A_i$ have i.i.d.~entries. In general, however, most problems do not have $\kappa\approx 1$.


In some cases, the restricted strong convexity and/or restricted
smoothness conditions might not be satisfied globally (i.e., for all rank-$r$ matrices),
but is satisfied for a more restricted subset of matrices $X,Y$; in these settings we may write, for instance,
that $\f$ satisfies $\alpha$-RSC over a particular subset.

We also need to consider a second important distinction between different classes of problems.
In many statistical settings, we may have an objective function $\f(X)$ that comes from a data likelihood, 
where $\mathbb{E}[\f(X)]$ is minimized at some true low-rank parameter matrix $X_\star$. When this is the case, it is common
to see $\norm{\nabla\f(X_\star)}\approx 0$. In other settings, though, there might not be any natural underlying
low-rank structure, and the gradient $\nabla\f(X)$ is large at any low-rank $X$.
We will therefore distinguish between two scenarios:
\[\textnormal{Vanishing gradient ($\min_{\rank(X)\leq r}\norm{\nabla\f(X)}\approx 0$) \ \  vs. \  Arbitrary gradient ($\min_{\rank(X)\leq r}\norm{\nabla\f(X)}\gg 0$)}.\]

\subsection{Existing results}\label{sec:conv_lit}
We now summarize the existing results as well as our own findings, for the different
types of assumptions and different styles of guarantees outlined above:
\begin{itemize}
	\item Near-isometry + Vanishing gradient $\Rightarrow$ Global optimality.\\
	For the most well-behaved problems, where the objective function $\f(X)$ exhibits
	both near-isometry and a vanishing gradient,
	it is possible to prove convergence to an (approximate) globally optimal estimate $\Xh$. For full-dimensional projected gradient descent algorithm, this has been established in the case of a least squares objective \citep{oymak2018sharp}; for factorized algorithms, an analogous result (no spurious local minima) has been established for certain least squares objectives \citep{bhojanapalli2016global,ge2016matrix,ge2017no,park2016non} and more generally for functions $\f$ with a near-isometry property \citep{zhu2018global}. (We will show in the present work that under near-isometry + vanishing gradient, both full-dimensional and factorized approaches contain no spurious local minima.)


	\item Arbitrary conditioning + Vanishing gradient $\Rightarrow$ Local optimality.\\
	With a non-ideal condition number $\kappa > 1$, assuming a vanishing gradient condition is sufficient 
	to prove a local optimality result,  or the existence of basin of attraction, both for full-dimensional PGD \citep{barber2017gradient}
	and for factorized approaches \citep{chen2015fast}; in the stronger setting of a near-isometry and a vanishing gradient, the local optimality result for factorized approaches has been also established by many works, including \citet{candes2015phase,zheng2015convergent,Zheng2016Convergence,tu2015low,bhojanapalli2016dropping,jain2013low}. Note that all of the previous local optimality results for factorized problems are built upon identifying local region of attraction for globally optimal solution $\Xh$ in the {\em factorized} space $(A,B)$. (We will give in the present work the local region of attraction in the  {\em full-dimensional} representations $X=AB^\top$.)
	
	\item Arbitrary conditioning + Arbitrary gradient $\Rightarrow$ Restricted optimality.\\
	In the most challenging setting, where we allow both arbitrary condition number $\kappa$
	and an arbitrarily large gradient, restricted optimality guarantees can still be obtained. This
	is established for the full-dimensional PGD algorithm \citep{jain2014iterative,liu2018between}, as well as its variants, such as approximate low-rank projection \citep{becker2013randomized,soltani2017fast}, and projection with debiasing step \citep{yuan2018gradient}; for sparse problems specifically, the analogous restricted optimality result has been established \citep{shen2017tight}. On the other hand, there is no known result for restricted optimality guarantees within the factorized approach. (We will show in the present work that it holds also for the factorized approach.)
\end{itemize}

\noindent This extensive literature has enabled us to understand the landscape of the nonconvex low-rank optimization problem, 
but the various results have been proved somewhat disjointly, using very different techniques
for analyzing full-dimensional PGD type algorithms versus factorized algorithms.
It is natural to ask whether this collection of results can be unified into a single framework.
Our main result, \thmref{main}, allows us to connect established results between PGD algorithms and factorized algorithms,
allowing us to establish  simpler proofs of some existing results, and provide new results in certain settings.
Overall, it is the goal of this section to provide a broader view of the landscape
of results known for low-rank optimization problems through the lens of the equivalence between PGD and factorized
algorithms established in~\thmref{main}.

\subsection{Results for global and local optimality}

In the special case of least squares objective, i.e., $\f(X)=\frac{1}{2}\fronorm{\mathcal{A}(X)-b}^2$ for a linear operator $\mathcal{A}:\R^{m\times n}\to \R^p$,  \citet{oymak2018sharp} show that, in the near-isometry setting ($\kappa\approx 1$), projected gradient descent offers a global convergence guarantee starting from any initialization point. Here we extend some of their technical tools to  general functions $\f(X)$.  We will write $\R^{m\times n}_{\rank(r)}$ to denote the set of $m\times n$ matrices with rank $\leq r$.
\begin{lemma}\label{lem:PGD_alphabeta}
	Suppose that $\f:\R^{m\times n}\rightarrow \R$ satisfies $\alpha$-RSC~\eqnref{RSC} over a subset $\mathcal{X}\subseteq\R^{m\times n}_{\rank(r)}$, where $\alpha>0$. 
	If $X_0,X_1\in\mathcal{X}$ are both fixed points of PGD run with step size $\eta_0>0$ or $\eta_1>0$, respectively, then 
	one of the following must hold:
	\begin{itemize}
		\item $X_0=X_1$, or
		\item $\rank(X_0)=r$ and $\rank(X_1)<r$ and $\frac{\norm{\nabla\f(X_0)}}{\sigma_r(X_0)}  \geq 2\alpha$, or
		\item $\rank(X_1)=r$ and $\rank(X_0)<r$ and $\frac{\norm{\nabla\f(X_1)}}{\sigma_r(X_1)}  \geq 2\alpha$, or
		\item $\rank(X_0)=\rank(X_1) = r$ and
		\[ \frac{\norm{\nabla\f(X_0)}}{\sigma_r(X_0)}  + \frac{\norm{\nabla\f(X_1)}}{\sigma_r(X_1)}\geq 2 \alpha.\]
	\end{itemize}  
\end{lemma}
\noindent The proof of this lemma is given in \appref{proofs}. 
We also verify a simple result:
\begin{lemma}\label{lem:globalmin_is_stationary}
	Suppose that $\f:\R^{m\times n}\rightarrow \R$ satisfies $\beta$-RSM~\eqnref{RSM} 
	over an open subset $\mathcal{X}\subseteq\R^{m\times n}_{\rank(r)}$. 
	If $\Xh$ is a global minimizer (i.e., $\f(\Xh)=\min_{\rank(X)\leq r}\f(X)$)  and $\Xh\in\mathcal{X}$, then $\Xh$ is a fixed point of projected gradient descent run with rank constraint $r$ and any step size $\eta\leq 1/\beta$.
\end{lemma}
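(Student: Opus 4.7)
The plan is to unfold the definition of a PGD stationary point into an algebraic inequality, then obtain that inequality by combining $\beta$-RSM with the global minimality of $\widehat{X}$.

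First, I would recall that $\widehat X$ is a stationary point of PGD at step size $\eta$ iff
\[
\widehat X \in \argmin_{\rank(X') \leq r} \Fronorm{X' - \big(\widehat X - \eta \df(\widehat X)\big)}^2.
\]
Expanding the squared norm on both sides (for $X'=\widehat X$ versus a competitor $X'$ with $\rank(X')\leq r$) and cancelling the common $\eta^2\fronorm{\df(\widehat X)}^2$ term, this condition is equivalent to requiring that, for every such $X'$,
\[
\inner{\df(\widehat X)}{\widehat X - X'} \;\leq\; \frac{1}{2\eta}\, \Fronorm{X' - \widehat X}^2.
\]
So the task reduces to verifying this inequality for all rank-$r$ competitors $X'$.

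Next, I would invoke $\beta$-RSM applied at $\widehat X$ and $X'$, both of which have rank at most $r$:
\[
\f(X') \;\leq\; \f(\widehat X) + \inner{\df(\widehat X)}{X' - \widehat X} + \frac{\beta}{2} \Fronorm{X' - \widehat X}^2,
\]
which can be rearranged into
\[
\inner{\df(\widehat X)}{\widehat X - X'} \;\leq\; \f(\widehat X) - \f(X') + \frac{\beta}{2}\Fronorm{X' - \widehat X}^2.
\]
Since $\widehat X$ is a global minimizer over the rank-$r$ set, $\f(\widehat X) - \f(X') \leq 0$, so the right-hand side is bounded above by $\tfrac{\beta}{2}\Fronorm{X' - \widehat X}^2$. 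Using the hypothesis $\eta \leq 1/\beta$, we have $\tfrac{\beta}{2} \leq \tfrac{1}{2\eta}$, which yields exactly the desired inequality.

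There is no real obstacle here: the argument is essentially the standard majorization picture behind the definition~\eqnref{local_smth}. The only subtlety worth flagging in the writeup is that PGD's projection step can be non-unique, so one should state the conclusion in the same sense as the footnote in Section 2 — namely that $\widehat X$ belongs to $\argmin_{\rank(X')\leq r}\fronorm{X' - (\widehat X - \eta \df(\widehat X))}$, not that this argmin is the singleton $\{\widehat X\}$.
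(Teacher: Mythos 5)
Your proof is correct, and it is organized differently from the paper's. The paper introduces the actual projection output $X_\eta=\pr{r}(\Xh-\eta\df(\Xh))$, uses the fact that $X_\eta$ beats $\Xh$ in the projection problem to get $\inner{\df(\Xh)}{X_\eta-\Xh}\leq -\tfrac{1}{2\eta}\fronorm{X_\eta-\Xh}^2$, and combines this with $\beta$-RSM and global optimality to force $X_\eta=\Xh$ --- but this argument only closes for $\eta<1/\beta$ (it yields $0\geq(\tfrac{1}{2\eta}-\tfrac{\beta}{2})\fronorm{X_\eta-\Xh}^2$, which is vacuous at $\eta=1/\beta$), so the paper must handle the boundary step size by a separate limiting argument $\eta'\uparrow 1/\beta$. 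You instead expand the squared distance to obtain the variational characterization $\inner{\df(\Xh)}{\Xh-X'}\leq\tfrac{1}{2\eta}\fronorm{X'-\Xh}^2$ for all rank-$r$ competitors $X'$, and verify it directly from RSM plus $\f(\Xh)\leq\f(X')$. The ingredients are identical, but your direction of comparison (showing $\Xh$ beats every competitor, rather than showing the computed projection must coincide with $\Xh$) buys two things: the case $\eta=1/\beta$ is covered uniformly with no limit argument, and the non-uniqueness of the projection is handled automatically since what you prove is precisely membership of $\Xh$ in the argmin set, which is the sense required by the footnote in Section 2. Your closing remark correctly identifies that the conclusion should be stated as membership in the argmin rather than uniqueness of the projection.
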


These lemmas will allow us to easily prove global optimality and local optimality results under the appropriate assumptions.
We now turn to the question of obtaining global and local optimality results for PGD and factorized algorithms. 
While results of this flavor are already known in the literature (see \secref{conv_lit} for some references),
our goal here is to give extremely short and clean proofs that illuminate the connection between the full-dimensional and factorized
representations of the optimization problem, and thereby also highlight the utility of our main result, \thmref{main}. In some cases, our work also establishes guarantees in a broader setting than previous results.

\subsubsection{Global optimality}\label{sec:global_opt}
In the setting where $\f(X)$ satisfies the near-isometry property, with condition number
$\kappa < 2$, we can obtain global optimality guarantees for both PGD and factorized methods
whenever $\norm{\nabla\f(X)}$ is sufficiently small, i.e., the vanishing gradient condition. (See \secref{conv_lit}
for related existing results in the literature.)

\begin{theorem}\label{thm:global_opt}
	Assume that $\f(X)$ satisfies $\alpha$-RSC~\eqnref{RSC}
	and $\beta$-RSM~\eqnref{RSM}  over an open subset $\mathcal{X}\subseteq\R^{m\times n}_{\rank(r)}$, and that $\beta < 2\alpha$. Suppose $\Xh$ is a global minimizer, i.e., $\f(\Xh)=\min_{\rank(X)\leq r}\f(X)$.  If $\Xh\in\mathcal{X}$ and $\Xh$ satisfies
	\[ \text{Either $\rank(\Xh)<r$, or $\rank(\Xh)=r$ and }\norm{\nabla\f(\Xh)}< (2\alpha -\beta)\cdot \sigma_r(\Xh)\;,\]
	then 
	\begin{itemize}
		\item $\Xh$ is the unique fixed point of PGD  in $\mathcal{X}$ for any step size $1/(2\alpha)<\eta\leq 1/\beta$ in the case that $\rank(\Xh)<r$, or in the case $\rank(\Xh)=r$,
		for
		any step size satisfying
		\begin{equation}\label{eqn:global_stepsize}\frac{1}{2\alpha - \frac{\norm{\nabla\f(\Xh)}}{\sigma_r(\Xh)}} < \eta\leq \frac{1}{\beta}.\end{equation}
		\item If $X=AB^\top\in\mathcal{X}$ where $(A,B)$ is a SOSP of $\g(A,B)$, then $X=\Xh$. 
	\end{itemize}
\end{theorem}
\noindent Note that, in the case that $\rank(\Xh)=r$, due to the condition $\norm{\nabla\f(\Xh)}< (2\alpha -\beta)\cdot \sigma_r(\Xh)$  the interval~\eqnref{global_stepsize} given for step size $\eta$ is always non-empty.

\begin{remark} 
	In some examples, such as the matrix sensing problem discussed later in \secref{matrix_sensing},
	the RSC/RSM conditions will hold globally, i.e., for $\mathcal{X} = \R^{m\times n}_{\rank(r)}$; this is why we use the term
	``global optimality'' to describe this result. In other settings,
	the RSC/RSM conditions may not hold universally over all rank-$r$ matrices
	but hold for a subset of matrices, e.g., all matrices satisfying an incoherence condition such as in the robust PCA problem (\secref{rpca});
	the above theorem is formulated to cover this type of scenario as well even though the term ``global optimality'' may no longer apply.
\end{remark}

\thmref{global_opt} proves that global optimality guarantees can be achieved as long as $\kappa<2$, i.e., the map $\f$ is a near-isometry. This type of assumption on $\kappa$ is crucial to achieving global optimality guarantees. For instance, \citet[Example 3]{zhang2018much} construct an example of objective function $\f(X)$ with $\beta=3\alpha$, i.e., $\kappa =3$, where there exists a fixed point $X$ that is {\em not} globally optimal. This proves that $\kappa <3$ is necessary for achieving a global optimality guarantee, while our work shows $\kappa<2$ is sufficient. While it is not the goal of the present work, an interesting open question is to close the gap between these necessary and sufficient conditions to identify an exact correspondence between condition number and the global optimality guarantee; see also~\citet{zhang2019sharp} for the sufficient and necessary conditions when rank $r=1$.

We now compare this result with some recent works in the literature. The first part of~\thmref{global_opt}, i.e., the result for fixed points of PGD  on $X\in\R^{m\times n}$, is an extension of global optimality results established in~\citet{oymak2018sharp}---their work is specific to a least-squares objective function, i.e., $\f$ is quadratic.\footnote{In~\citet{oymak2018sharp}, the authors mention that their results are more broadly applicable than least squares objective functions, but we are not aware of any such results that have appeared in the follow-up papers. } On the other hand, the second part of the theorem, i.e., the result on SOSPs of the factorized problem, is already known for various types of problems, such as the matrix sensing and the matrix completion problems \citep{bhojanapalli2016global,ge2016matrix,ge2017no}. Similarly, \citet{zhu2018global} also establish ``no spurious local minima'' under conditions similar to~\thmref{global_opt}, i.e., when $\f(X)$ satisfies $\alpha$-RSC and $\beta$-RSM with $\alpha\approx \beta$. While these results typically require more involved analysis than our framework presented here, they further prove strict saddle property (see, for instance, \citet[Assumption A2]{jin2017escape}) of the factorized problems under which polynomial time convergence is ensured for finding approximate SOSPs (hence approximate globally optimal solution). Such guarantee on the rate of convergence is not provided in~\thmref{global_opt}, and we leave the study of approximate SOSPs in the future work.

\begin{proof}[Proof of \thmref{global_opt}]
	First we consider PGD. 
	By \lemref{globalmin_is_stationary}, we know that $\Xh$ is a fixed point for any $\eta\leq 1/\beta$.
	
	We first consider the case that $\rank(\Xh)<r$. Let $X\in\mathcal{X}$ be another fixed point of PGD for any step size $1/2\alpha<\eta\leq 1/\beta$.
	Suppose that $X\neq \Xh$. Then applying \lemref{PGD_alphabeta}, we must have $\rank(X) = r$ with $\frac{\norm{\nabla\f(X)}}{\sigma_r(X)}\geq 2\alpha$.
	However, by~\eqnref{PGD_SP}, we have $\norm{\nabla\f(X)}\leq \eta^{-1}\sigma_r(X) < 2\alpha\sigma_r(X)$, which is a contradiction.
	
	Next, consider the case that $\rank(\Xh) = r$, and let $X\in\mathcal{X}$ be another fixed point of PGD for any step size $\eta$
	satisfying~\eqnref{global_stepsize}. Suppose $X\neq \Xh$. By \lemref{PGD_alphabeta}, we either have $\rank(X)<r$ and
	$\frac{\norm{\nabla\f(\Xh)}}{\sigma_r(\Xh)}\geq 2\alpha$, or alternatively $\rank(X)=r$ and
	\[2\alpha \leq \frac{\norm{\nabla\f(\Xh)}}{\sigma_r(\Xh)}  + \frac{\norm{\nabla\f(X)}}{\sigma_r(X)}\leq  \frac{\norm{\nabla\f(\Xh)}}{\sigma_r(\Xh)}  +   \frac{1}{\eta}\]
	by applying~\eqnref{PGD_SP}. In either case, this contradicts our assumption~\eqnref{global_stepsize} on $\eta$.
	
	Next we turn to the factorized setting. Let $X=AB^\top\in\mathcal{X}$ where $(A,B)$ is a SOSP of $\g(A,B)$.
	Comparing the definition of $\beta$-RSM over $\mathcal{X}$ with that of the local smoothness parameter $\beta_{\mathsf{local}}(X)$ defined in~\eqnref{local_smth},
	we can see that since $\mathcal{X}\subseteq\R^{m\times n}_{\rank(r)}$ is an open subset, $\beta_{\mathsf{local}}(X)\leq \beta$ by definition, and therefore $\eta\leq 1/\beta_{\mathsf{local}}(X)$. Therefore,
	applying our main result, \thmref{main}, we see that $X$ must be a fixed point of PGD at step size $\eta = 1/\beta$,
	which proves that $X=\Xh$ by our work above.
\end{proof}

\subsubsection{Local optimality}
Next we turn to the local optimality guarantees, i.e., the existence of local region of attraction, that can be obtained when $\f$ exhibits a vanishing gradient,
but may have an arbitrarily large condition number $\kappa$. (See \secref{conv_lit}
for related existing results in the literature.)

\begin{theorem}\label{thm:local_opt}
	Assume that $\f(X)$ satisfies $\alpha$-RSC~\eqnref{RSC} over a subset $\mathcal{X}\subseteq\R^{m\times n}_{\rank(r)}$. Assume that $\Xh$ is a global minimizer, i.e., $\f(\Xh)=\min_{\rank(X)\leq r}\f(X)$, that $\Xh\in\mathcal{X}$, and that $\Xh$ satisfies
	\[\text{Either $\rank(\Xh)<r$, or $\rank(\Xh)=r$ and }\norm{\nabla\f(\Xh)}< \alpha \cdot \sigma_r(\Xh)\;.\]
	Let
	\[\mathcal{N} = \left\{X\in\mathcal{X} : \rank(X)< r\text{ or }\frac{\norm{\nabla\f(X)}}{\sigma_r(X)} < 2\alpha\right\}\]
	in the case that $\rank(\Xh)<r$, or
	\[\mathcal{N} = \left\{X\in\mathcal{X} :  \rank(X)< r\text{ or }\frac{\norm{\nabla\f(\Xh)}}{\sigma_r(\Xh)} +\frac{\norm{\nabla\f(X)}}{\sigma_r(X)} < 2\alpha\right\}\]
	in the case that $\rank(\Xh)=r$.
	Then:
	\begin{itemize}
		\item  For any fixed point $X$ of PGD with any step size $\eta>0$, if $X\in\mathcal{N}$, then $X=\Xh$. 
		\item If $X=AB^\top\in\mathcal{N}$ where $(A,B)$ is a SOSP of $\g(A,B)$, then $X=\Xh$.
	\end{itemize}
\end{theorem}
\noindent  We note that $\Xh\in\mathcal{N}$ by the assumptions of the theorem. 
In this setting where $\kappa$ may be arbitrarily large, global optimality does not hold in general (as shown by \citet{zhang2018much}'s
counterexample, discussed in \secref{global_opt} above). Nonetheless, the results in~\thmref{local_opt} still ensure the existence of regions of attraction 
$\mathcal{N}$ within which the global minimum $\Xh$ will be discovered, for both the full-dimensional and factorized methods.

To compare with the existing results, the first part of~\thmref{local_opt} (for fixed points of PGD) is an immediate result given the work in~\citet{barber2017gradient}. Next, turning to the second part of the result, on the SOSPs of the factorized approach, some related results in the existing literature have shown that certain rank-constrained problems exhibit local region of attraction near the global minimum $\Xh$ \citep{candes2015phase,zheng2015convergent,Zheng2016Convergence,tu2015low,bhojanapalli2016dropping,jain2013low}. While these problems satisfy the near-isometry property with $\kappa\approx 1$, our result in~\thmref{local_opt} extends to a broader setting with an arbitrarily large condition number $\kappa$. \citet{chen2015fast} have also established local convergence guarantees under conditions similar to restricted strong convexity and smoothness, but the difference is that they work with RSC and RSM type conditions defined directly on the factorized variable pair $(A,B)$. In addition, many of these works address the positive semidefinite setting, $X=AA^\top$, rather than the generic setting $X=AB^\top$ considered here.

\begin{proof}[Proof of \thmref{local_opt}]
	
	By \lemref{localmin_vs_fixedpt}, we know that $\Xh$ is a fixed point for PGD with step size $\eta\leq 1/\beta_{\textsf{local}}(\Xh)$.
	(Note that $\beta_{\textnormal{local}}$ may be arbitrarily large, but must be finite since $\f$ is twice differentiable.)
	Suppose that $X\in\mathcal{N}$ is another fixed point at some  (potentially different) step size $\eta>0$, with $X\neq \Xh$.
	Applying \lemref{PGD_alphabeta} with $X_0 = \Xh$ and $X_1 = X$ yields a contradiction to the definition of $\mathcal{N}$, either for $\rank(\Xh)<r$ or $\rank(\Xh)=r$, unless $X=\Xh$.
	
	Next we turn to factorized setting. 
	By~\thmref{main} we know that any $X=AB^\top\in\mathcal{X}$ for a SOSP $(A,B)$ must be a fixed point of PGD at any step size $\eta\leq 1/\beta_{\textsf{local}}(X)$ (again $\beta_{\textsf{local}}(X)$ can be extremely large).
	If also $X\in\mathcal{N}$ then this 
	proves that $X=\Xh$ by our work above.
	
\end{proof}

\subsection{A restricted optimality guarantee}\label{sec:restricted_opt}

In this last setting, we will make no assumptions on either the gradient or the condition number, i.e., it may be possible that $\norm{\nabla\f(\Xh)}$ is large and the condition $\kappa$ is large as well. (See \secref{conv_lit}
for related existing results in the literature.)

Under such assumptions, to the best of our knowledge, there is no guaranteed result to solve the low-rank minimization problem either locally or globally---identifying a region of attraction in a deterministic way is a nontrivial task. Therefore, we may wish to instead establish a weaker {\em restricted optimality} guarantee, which entails proving that the algorithm converges to some matrix $X$ satisfying
\[\f(X)\leq \min_{\rank(Y)\leq r'}\f(Y),\]
where the rank $r'<r$ proves a more restrictive constraint. In a statistical setting where we are aiming to recover some true low-rank parameter, we might think of $r'$ as the true underlying rank,
while $r\geq r'$ is a relaxed rank constraint that we place on our optimization scheme. More generally, we are simply aiming
to show that optimizing over rank $r$, while not ensuring the best rank-$r$ solution, is competitive with the best lower-rank solution.

Under these conditions, \citet{liu2018between} prove that any fixed point $X$ of PGD with step size $\eta  = 1/\beta$ satisfies
restricted optimality with respect to any rank $r' < r/\kappa^2$.
Based on our main result, \thmref{main}, the same guarantee also holds for any SOSP of the factorized problem.
For completeness, we restate their result along with the new extension to the factorized problem:
\begin{theorem}\label{thm:restricted_opt_upperbd}
	Assume that $\f(X)$ satisfies $\alpha$-RSC~\eqnref{RSC}
	and $\beta$-RSM~\eqnref{RSM}  over an open subset $\mathcal{X}\subseteq\R^{m\times n}_{\rank(r)}$. Let $\kappa = \beta/\alpha$. Then:
	\begin{itemize}
		\item \citep{liu2018between} For any fixed point $X\in\mathcal{X}$ of PGD with step size $\eta = 1/\beta$,
		\begin{equation}\label{eqn:restricted_opt_stationary}\f(X)\leq \min_{\rank(Y)< r/\kappa^2,Y\in\mathcal{X}}\f(Y),\end{equation}
		i.e., $X$ satisfies restricted
		optimality with respect to any rank $r'<r/\kappa^2$ within $\mathcal{X}$.
		\item For any $X=AB^\top\in\mathcal{X}$  where $(A,B)$ is a SOSP of the factorized problem $\g(A,B)$, 
		\[\f(AB^\top) \leq \min_{\rank(Y)< r/\kappa^2,Y\in\mathcal{X}} \f(Y),\]
		i.e., $X=AB^\top$ satisfies restricted
		optimality with respect to any rank $r'<r/\kappa^2$ within $\mathcal{X}$.
	\end{itemize}
\end{theorem}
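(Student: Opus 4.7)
(factorized part).}
The plan is to treat the second bullet as a direct corollary of the first bullet plus \thmref{main}: once we check that any SOSP of the factorized problem qualifies as a stationary point of PGD at the specific step size $\eta = 1/\beta$, the restricted optimality statement follows verbatim from the cited result of~\citet{liu2018between}.

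First I would verify the comparison $\beta_{\f}(X) \leq \beta$ at every $X$ with $\rank(X)\leq r$. This is immediate from the definitions: the numerator in~\eqnref{local_smth} is the excess of $\f(Y)$ over its first-order expansion at $X$, and the supremum is taken over $Y$ with $\rank(Y)\leq r$ and $\fronorm{Y-X}\leq \epsilon$. The $\beta$-RSM condition~\eqnref{RSM} bounds exactly that numerator by $\tfrac{\beta}{2}\fronorm{Y-X}^2$ for all such $Y$, so the supremum (and hence the limit) is at most $\beta$.

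Next, let $(A,B)$ be any SOSP of the factorized problem and set $X=AB^\top$. By \thmref{main}, $X$ is a stationary point of PGD at any step size $\eta\leq 1/\beta_{\f}(X)$; in the rank-deficient case $\rank(X)<r$ we even have $\nabla \f(X)=0$, so $X$ is a stationary point at every step size. Combining with $\beta_{\f}(X)\leq \beta$, we conclude that $X$ is a stationary point of PGD at the specific step size $\eta=1/\beta$. Applying the first bullet of the theorem, i.e.~the PGD restricted optimality guarantee~\eqnref{restricted_opt_stationary} of~\citet{liu2018between}, now yields
\[
\f(AB^\top) = \f(X) \;\leq\; \min_{\rank(Y)<r/\kappa^2}\f(Y),
\]
as desired.

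There is essentially no obstacle: the content is the reduction, and the only thing one must be careful about is matching the step-size hypothesis of the PGD restricted optimality result ($\eta=1/\beta$) with the step size allowed by \thmref{main} ($\eta\leq 1/\beta_{\f}(X)$), which is precisely why the inequality $\beta_{\f}(X)\leq \beta$ is stated and used explicitly. All of the heavy lifting was done in the proofs of \thmref{main} and of~\citet{liu2018between}'s stationary-point bound; the present statement is their clean composition.
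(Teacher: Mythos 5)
Your proposal is correct and follows essentially the same route as the paper: the authors also cite \citet{liu2018between} for the first bullet and obtain the second by noting $\beta_{\f}(X)\leq \beta$ (so the SOSP is a PGD stationary point at $\eta=1/\beta$ via \thmref{main}) and then invoking the first bullet, exactly as in the proof of \thmref{global_opt}. Your write-up just makes the $\beta_{\f}(X)\leq\beta$ comparison and the rank-deficient case more explicit, which is fine.
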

\begin{proof}[Proof of \thmref{restricted_opt_upperbd}]
	The first claim follows by the definition of  $\alpha$-RSC used on the two points $X,Y$ together with Lemma $1$ in \citet{liu2018between}, which bounds the restricted concavity of hard thresholding. The second claim follows immediately by combining the first claim with \thmref{main},
	as in the proof of \thmref{global_opt}.
\end{proof}

Conversely, \citet{liu2018between} also establish that this factor of $\kappa^2$ is sharp  in general (on all low-rank matrices), i.e., restricted optimality cannot
be guaranteed relative to rank $r'>r/\kappa^2$. Here we establish the analogous result
for the factorized problem. For completeness, we state the two results together.
\begin{theorem}\label{thm:restricted_opt_lowerbd}
	For any parameters $\beta\geq \alpha >0$ and any rank $r' > r/\kappa^2$, 
	there exists a function $\f:\R^{m\times n}\rightarrow \R$ satisfying $\alpha$-RSC~\eqnref{RSC} and $\beta$-RSM~\eqnref{RSM}  over $\R^{m\times n}_{\rank(r)}$,
	such that:
	\begin{itemize}
		\item \citep{liu2018between} There exists a fixed point $X$ of PGD with step size $\eta = 1/\beta$, such that
		\[\f(X) > \min_{\rank(Y)\leq r'} \f(Y).\]
		\item There exists a second-order stationary point  $(A,B)$ of the factorized problem,
		such that
		\[\f(AB^\top) > \min_{\rank(Y)\leq r'} \f(Y).\]
	\end{itemize}
\end{theorem}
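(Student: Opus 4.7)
The first bullet is established by \citet{liu2018between}, so my task is only to prove the second. My plan is to re-use their counterexample: take the same function $\f$ (which satisfies $\alpha$-RSC and $\beta$-RSM on rank $r$), the same rank-$r$ PGD stationary point $X_0$ (which satisfies $\f(X_0)>\min_{\rank(Y)\leq r'}\f(Y)$), and factor $X_0=AB^\top$ via the balanced SVD choice $A=U_0\Sigma_0^{1/2}$, $B=V_0\Sigma_0^{1/2}$. The suboptimality conclusion then transfers to $(A,B)$ verbatim, so everything reduces to showing that $(A,B)$ is a SOSP of $\g(A,B)=\f(AB^\top)$.

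The first-order condition $\dg(A,B)=0$ follows from \lemref{FOSP}. For the Hessian condition $\ddg(A,B)\succeq 0$, the expansion derived in the proof of \thmref{main} reduces the task to proving
\[2\inner{\df(X_0)}{A_1B_1^\top}+\ddf(X_0)\bigl(AB_1^\top+A_1B^\top,\,AB_1^\top+A_1B^\top\bigr)\geq 0\]
for every $(A_1,B_1)\in\R^{m\times r}\times\R^{n\times r}$. Since the \citet{liu2018between} construction uses a quadratic $\f$ with a coordinatewise Hessian, this is a single explicit quadratic form in $(A_1,B_1)$ that can be analyzed directly. I would decompose $A_1,B_1$ into projections onto and orthogonal to $\col(A)$ and $\col(B)$: the first-order conditions $\df(X_0)^\top A=0$ and $\df(X_0)B=0$ force the cross term $\inner{\df(X_0)}{A_1B_1^\top}$ to vanish whenever either factor is ``parallel,'' leaving only the case where both have nonzero perpendicular components, on which $\alpha$-RSC gives a matching lower bound on the Hessian term.

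The main obstacle is a potential factor-of-$\kappa$ gap. Probing the worst direction $(A_1,B_1)=(-u_\star u_r^\top A,\,v_\star v_r^\top B)$ from the proof of \thmref{main}, a pure $\alpha$-RSC lower bound on the Hessian would only suffice if $\norm{\df(X_0)}\leq\alpha\sigma_r(X_0)$, whereas PGD stationarity only guarantees $\norm{\df(X_0)}\leq\beta\sigma_r(X_0)$. The resolution is that the \citet{liu2018between} construction is designed so that the Hessian acts with the full coefficient $\beta$ (not merely $\alpha$) on precisely the directions where $\df(X_0)$ concentrates: for the extremal $H=\sigma_r(u_rv_\star^\top-u_\star v_r^\top)$ one should find $\ddf(X_0)(H,H)=2\beta\sigma_r^2$ rather than $2\alpha\sigma_r^2$, exactly matching $2\sigma_r\norm{\df(X_0)}$. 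Verifying this alignment on their explicit diagonal construction, and then extending nonnegativity to all $(A_1,B_1)$ by the same coordinatewise computation, is the key technical step that upgrades their PGD sharpness argument into a SOSP sharpness argument.
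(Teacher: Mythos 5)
You correctly isolate the crux --- the first-order conditions transfer for free via \lemref{FOSP}, and the whole difficulty is the second-order condition, where probing the extremal direction exposes a factor-of-$\kappa$ gap between the cross term $2\inner{\df(X_0)}{A_1B_1^\top}$ (of size $\beta$ per unit, since PGD stationarity at $\eta=1/\beta$ permits $\norm{\df(X_0)}=\beta\,\sigma_r(X_0)$) and what an $\alpha$-RSC lower bound on the Hessian can supply. But your resolution of that gap is an unverified hope about the structure of the \citet{liu2018between} construction, and this is precisely where the argument breaks: the paper states explicitly that the converse ``does not follow directly from \citet{liu2018between}'s work, and instead requires a new construction.'' The authors do not reuse that counterexample; they build a fresh objective
\[\f(X) = - \beta\,\inner{X_1}{X-X_0} + \tfrac{\alpha}{2}\fronorm{X-X_0}^2 + \tfrac{\beta-\alpha}{2}\fronorm{M\circ (X-X_0)}^2,\]
where the mask $M$ is supported exactly on the two off-diagonal blocks. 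The third term is engineered so that the Hessian has coefficient $\beta$ (not $\alpha$) on precisely the directions $A_0B_1''{}^\top + A_1''B_0^\top$ generated by components of $(A_1,B_1)$ orthogonal to the column spaces of $(A_0,B_0)$; the quadratic form then reduces to $2\beta\,\mathrm{trace}(A_1''B_1''{}^\top)+\beta\fronorm{A_1''}^2+\beta\fronorm{B_1''}^2\geq 0$, with no slack. If the Hessian were only $\alpha$ on those blocks --- as it would be for a generic quadratic satisfying $\alpha$-RSC with equality near $X_0$ --- then choosing $A_1''$ and $B_1''$ to make the trace term equal $-2\fronorm{A_1''}\fronorm{B_1''}$ yields the value $-2(\beta-\alpha)\fronorm{A_1''}\fronorm{B_1''}<0$, so the point is \emph{not} a SOSP. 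Your plan therefore stands or falls on a delicate property of someone else's construction that you have not checked and that the authors evidently could not rely on.

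A second, smaller gap: even granting the favorable Hessian alignment, it is not enough to verify nonnegativity along the single rank-one extremal direction $(-u_\star u_r^\top A, v_\star v_r^\top B)$. The gradient $\df(X_0)$ in these constructions has rank $r'\geq 1$ with all nonzero singular values equal to $\beta$, so the cross term can be as negative as $-2\beta\fronorm{A_1''}\fronorm{B_1''}$ for general perpendicular pairs, and one needs the full coefficient $\beta$ on \emph{both} perpendicular blocks simultaneously to absorb it. The paper's block decomposition of $(A_1,B_1)$ handles this uniformly; your sketch gestures at ``the same coordinatewise computation'' but the computation only closes because of the mask term you do not have.
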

\noindent This result is proved in \appref{proofs}. Unlike the restricted optimality guarantee above (\thmref{restricted_opt_upperbd}),
this converse result does not follow directly from \citet{liu2018between}'s work, and instead requires a new construction.

\section{Applications}\label{sec:appl}

In this section we apply our framework developed in~\secref{main_result} and/or~\secref{conv} to several concrete low-rank optimization problems, including matrix sensing, matrix completion, and robust PCA.
These problems typically involve an unknown ground truth matrix $\Xs\in\R^{m\times n}$ that is low-rank and the goal is to accurately recover it from a few or sparse or corrupted measurements. 
In many cases, $\Xs$ itself becomes a global minimizer of the rank-constrained minimization problem~\eqnref{low_rank_opt}  in which case we also denote by $\Xs$ (instead of $\Xh$) to represent a global minimizer.

\subsection{Matrix sensing}\label{sec:matrix_sensing}
In the matrix sensing problem~\citep{recht2010guaranteed}, we aim to recover a low rank matrix $\Xs\in\R^{m\times n}$ given $k$ linear observations $b_1=\inner{L_1}{\Xs},\dots,b_k=\inner{L_k}{\Xs}$. Therefore, the least square objective function $\f: \R^{m\times n}\rightarrow \R$ for the matrix sensing problem takes the form
\begin{equation}\label{eqn:obj_sensing}
	\f(X)=\frac{1}{2k}\sum_{i=1}^{k}\left(\inner{L_i}{X}-b_i\right)^2.
\end{equation}
The corresponding rank-$r$ factorized objective function $\g: \R^{m\times r} \times \R^{n\times r}\rightarrow \R$ is defined as 
\[\g(A,B)=\f(AB^\top)\]
To conform with our notion of condition number, we define the following set of sensing matrices:
\begin{align*}
	&\mathcal{L}(\alpha,\beta,r)=\{(L_1,\dots,L_k):\textnormal{the map $X\mapsto \frac{\sum_{i=1}^k\inner{L_i}{X}^2}{2k}$ is $\alpha$-RSC and $\beta$-RSM}\\
	&\hspace{5cm} \textnormal{with respect to the rank constraint $r$}\}.
\end{align*}
Then direct application of our results in Section~\ref{sec:conv} gives the following lemma (without proof).
\begin{lemma}\label{lem:matrix_sensing}
	Consider a matrix sensing model with a rank-$r$ matrix $\Xs$. Define the objective function $\f(X)$ as in equation~(\ref{eqn:obj_sensing}). Then
	\begin{itemize}
		\item If the sensing matrices satisfy $(L_1,\dots,L_k)\in \mathcal{L}(\alpha,\beta,r)$ with $\beta<2\alpha$, then for any second-order stationary point $(A,B)$ of the factorized objective $\g$, $AB^\top=\Xs$. 
		\item Define the following neighborhood of $\Xs$,
		\[\mathcal{N}(\Xs)=\{X\in \R^{m\times n}_{\rank(r)}: \rank(X)< r \text{ or }  \norm{\nabla\f(X)}< 2\alpha\cdot \sigma_r(X)\}.\]
		If the sensing matrices satisfy $(L_1,\dots,L_k)\in \mathcal{L}(\alpha,\beta,r)$, then for any second-order stationary point $(A,B)$ of the factorized objective $\g$, if $X=AB^\top \in \mathcal{N}(\Xs)$ then $X=\Xs$.
		\item If the sensing matrices  satisfy $(L_1,\dots,L_k)\in \mathcal{L}(\alpha',\beta',r')$ with $r'> (\frac{\beta'}{\alpha'})^2r$, then for any second-order stationary point $(A,B)$ of the  rank-$r'$ factorized objective function $\g_{r'}$, $\g_{r'}(A,B)=0$.\footnote{The rank-$r'$ objective $\g_{r'}$ is defined as the function $\g_{r'}(A,B)=\f(AB^\top)$ where the factorization $X=AB^\top$ is over-parametrized by rank $r'>r$, i.e. $A\in\R^{m\times r'}$ and $B\in\R^{n\times r'}$.} Since $\f$ satisfies $\alpha'$-RSC~ over $\R^{m\times n}_{\rank(r')}$ and $\f(\Xs)=0$ while $\nabla\f(\Xs)=0$, this further implies that $AB^\top=\Xs$.
	\end{itemize}
\end{lemma}

\noindent In the simplest setting, the sensing matrices  $(L_1,\dots,L_k)$ are drawn i.i.d. from standard normal distribution $\mathcal{N}(0,1)$, then, with high probability, $(L_1,\dots,L_k)\in \mathcal{L}(\alpha,\beta,r)$ with $\beta<2\alpha$ (e.g., \citet{recht2010guaranteed}). In this case  global optimality of SOSPs of $\g$ follows from~\lemref{matrix_sensing} in a straightforward manner. In the more general setting where  the sensing matrices  $(L_1,\dots,L_k)$ are drawn i.i.d. from normal distribution $\mathcal{N}(0,\Sigma)$ with covariance matrix $\Sigma\in\R^{mn\times mn}$, \citet[Lemma 7]{agarwal2010fast} proves that with high probability $(L_1,\dots,L_k)\in \mathcal{L}(\alpha,\beta,r)$ with $\alpha=c_1\lambda_{\mathsf{min}}(\Sigma)$ and $\beta=c_2\lambda_{\mathsf{max}}(\Sigma)$ for some $c_1,c_2>0$. In this case, \lemref{matrix_sensing} provides respectively local and restricted optimality guarantees for SOSPs of the factorized problem. In particular, we observe that  any SOSPs $(A,B)$ of the factorized problem still achieve the global minimizer, i.e. $AB^\top = \Xs$, if we over-parametrize $\f$ with rank $r'\approx \lambda^2_{\mathsf{max}}(\Sigma)/\lambda^2_{\mathsf{min}}(\Sigma)\cdot r$. This result has not been known previously in the literature of matrix sensing but somewhat follows directly from our main correspondence result~\thmref{main}.

\subsection{Robust PCA}\label{sec:rpca}

Robust PCA \citep{candes2011robust,chandrasekaran2011rank} refers to the decomposition of the data matrix $\Ds$ into a low-rank component $\Xs$ and a sparse component $\Ss$, so that the sum of two components recover the original matrix. One can view the sparse component to be some  outliers which we wish to separate from the low-rank signal. Concretely, suppose that we are given $\Ds=\Xs+\Ss \in \R^{m\times n}$ with $\rank(\Xs)\leq r$ and where $\Ss$ is $s$-sparse in each column, then we consider the following minimization problem
\begin{equation}\label{eqn:robust_PCA_problem}
	\min_{X}  \left\{\f(X) = \frac{1}{2}\min_{S\in\Sset} \fronorm{\Ds - (X+S)}^2:\rank(X)\leq r \right\}.
\end{equation}
Here to specify the sparsity of the sparse component $S$, we set 
\[\Sset=\{S\in\R^{m\times n}: \norm{S_{j}}_0\leq  \norm{\Ss{}_{j}}_0=s \text{ for } j=1,\ldots,n\}, \]
where $S_j$ and $\Ss{}_{j}$ denote $j$-th columns of $S$ and $\Ss$ respectively. 

Before we apply our  results to the above problem, note that the factorized objective function $\g(A,B)=\f(AB^\top)$ in~\eqnref{robust_PCA_problem} is not twice-differentiable with respect to $(A,B)$. To extend our discussion on this setting, at any point $X$ with rank $\leq r$, we consider the following majorization function of $\f$:
\[\f_X(\widetilde{X}) =\frac{1}{2}\fronorm{\Ds - (\widetilde{X}+S(X))}^2, \]
where we define $S(X)\in\argmin_{S\in\Sset}\fronorm{\Ds - (X+S)}^2$.
Then it is easy to see that $\f_X(\widetilde{X})$ majorizes the original function $\f$ while matches at $X$ up to the first-order term, i.e. $\f_X(\widetilde{X})\geq \f(\widetilde{X})$ and $\f_X(X)=\f(X)$ and $\nabla\f_X(X)=\nabla\f(X)$. Denoting $\g_{X}(\widetilde{A},\widetilde{B})=\f_X(\widetilde{A} \widetilde{B}^\top)$ to be the factorized form of $\f_X$, we utilize the following version of second-order stationary points of $\g$, which is defined as (see also~\citet[Definition 5]{ge2017no})
\begin{equation}\label{eqn:SOSP_rpca}\nabla\g(A,B)=0 \textnormal{ and } \nabla^2\g_{X}(A,B)\succeq 0. \end{equation}
That is, we say $(A,B)$ is a second-order stationary point of $\g$ if it satisfies the conditions~\eqnref{SOSP_rpca} with $X=AB^\top$.

Next suppose that $\Xs=\As\Bs^\top$, where $\As=\Us\sqrt{\Sigmas}$ and $\Bs=\Vs\sqrt{\Sigmas}$, and $\Xs=\Us\Sigmas\Vs^\top$ be the rank-$r$ SVD of $\Xs$.
It is well-known that the robust PCA model suffers from non-identifiability issue and in particular we cannot recover the true matrix $\Xs$ if $\Xs$ is itself both low-rank and {\em sparse}. To prevent this, we assume that $\Xs$ is incoherent relative to the sparse matrices~\citep{candes2011robust}, i.e., 
\begin{equation}\label{eqn:mu_incoherence}\norm{\As}_{2,\infty}\leq \sqrt{\sigma_1(\Xs)\frac{\mu r}{m}}, \;\;\norm{\Bs}_{2,\infty}\leq \sqrt{\sigma_1(\Xs)\frac{\mu r}{n}}.\end{equation}

With this definition in place,  now suppose that $(A,B)$ is a SOSP of $\g$ with $X=AB^\top$. Since $\f_X(\cdot)$ trivially satisfies RSC~\eqnref{RSC} and RSM~\eqnref{RSM} with $\alpha=\beta=1$ (over the entire space of matrices $\R^{m\times n}$), and at a global minimum $\widetilde{X}_{\mathsf{global}}$, i.e., $\f(\widetilde{X}_{\mathsf{global}})=\min_{\rank(\widetilde{X})\leq r}\f_X(\widetilde{X})$,
\[\norm{\nabla\f_X(\widetilde{X}_{\mathsf{global}})}=\sigma_{r+1}(\Ds - S(X)) < \sigma_r(\Ds-S(X))=(2\alpha-\beta)\sigma_r(\widetilde{X}_{\mathsf{global}}),\footnote{It can be shown that the pathological case, $\sigma_r(\Ds-S(X))=\sigma_{r+1}(\Ds-S(X))$, does not occur under an additional assumption on the magnitude of the true sparse matrix $\Ss$, see~\appref{rpca} for further details; see also~\citet{ge2017no}. To simplify the presentation, here we assume this is the case.}\]
our result~\thmref{global_opt} applies to show that $AB^\top=\widetilde{X}_{\mathsf{global}}$ is the unique fixed point of PGD run on $\f_X(\cdot)$ at step size $\eta=1/\beta=1$. In other words, $X =  \Proj_r\big(\Ds - S(X) \big)$. 

Furthermore, by definition of $S(X)$, this means that $(X,S(X))$ is together a joint fixed point of PGD with step sizes $\eta_X=\eta_S=1$ run on the joint problem 
\begin{equation}\label{eqn:robust_PCA_joint}\min_{X,S\in\R^{m\times n}}\{\f_{\mathsf{joint}}(X,S)=\frac{1}{2}\fronorm{\Ds - (X+S)}^2:\rank(X)\leq r, \; S\in\Sset \}.\end{equation} 
Under the assumption that $\Xs$ is $\mu$-incoherent and  each column of $\Ss$ is $s$-sparse, we can then prove that the joint objective function $\f_{\mathsf{joint}}$ exhibits {\em joint} restricted strong convexity and restricted smoothness over  the pairs $\big((X,S),(\Xs,\Ss)\big)$, with $2\alpha > \beta$, whenever $(X,S)\in\R^{m\times n}_{\rank(r)} \times \Sset$ {\em and} $X$ is incoherent.
This allows simple extension of (first part of)~\thmref{global_opt} to the joint minimization problem~\eqnref{robust_PCA_joint} to guarantee that $(X,S(X))$ is the unique fixed point of PGD for joint minimization, as long as $X$ is incoherent relative to sparse matrices. Since $(\Xs,\Ss)$ is trivially a fixed point, this means that $(X,S(X))=(\Xs,\Ss)$, and in particular we get $AB^\top =\Xs$. We state this result in the following lemma, whose proof is given in~\appref{proofs}.
\begin{lemma}\label{lem:robust_PCA}
	Consider the  robust PCA problem~\eqnref{robust_PCA_problem}. Suppose that the rank-$r$ matrix $\Xs$ is $\mu$-incoherent~\eqnref{mu_incoherence}, and the sparse matrix $\Ss$ is $s$-sparse in each column. 
	Let $\kappa(\Xs)=\sigma_1(\Xs)/\sigma_r(\Xs)$.
	Then there exists a constant $c_1>0$ such that the following holds: for any second-order stationary point $(A,B)$ of the factorized objective $\g$, as in equation~\eqnref{SOSP_rpca}, if $(A,B)$ satisfies the following conditions, 
	\begin{equation}\label{eqn:mu_incoherence2} A^\top A=B^\top B\footnote{Note that this condition can be easily imposed on the factors $A$ and $B$ if we add a regularizer to the factorized objective function. In particular, by~\citet[Theorem 3]{zhu2018global}, any critical point $(A,B)$ of $\g_{\textsf{reg}}$ satisfies $A^\top A=B^\top B$ and the correspondence result still holds by~\lemref{reg_obj}. See~\secref{reg_obj}.} \text{ and } \norm{A}_{2,\infty}\leq c_2\sqrt{\sigma_1(\Xs)\frac{\mu r}{m}} \text{ and } \norm{B}_{2,\infty}\leq c_2\sqrt{\sigma_1(\Xs)\frac{\mu r}{n}}, \end{equation}
	for some constant $c_2>0$ such that $4c_2\sqrt{\frac{\kappa(\Xs) \mu r s}{\min\{m,n\}}}\leq c_1$,
	then $AB^\top = \Xs$. In other words, $\Xs=\As\Bs^\top$ is the unique ``incoherent'' second-order stationary point of $\g$. 
\end{lemma}

\noindent The result of the lemma requires the stationary point $(A,B)$ to be $\mu$-incoherent~\eqnref{mu_incoherence2}. To enforce the incoherence on the factored matrices $(A,B)$,  some works are focused on putting explicit incoherence penalty/constraint at each iterate (e.g., \citet{chen2015fast,Zheng2016Convergence,ge2017no}); while other works  have proved that each update of the factorized function $\g$ stays incoherent near the true matrix $\Xs$  {\em without} explicit incoherence regularization (e.g., \citet{ma2017implicit,chen2019noisy}). In practice simple algorithms such as gradient descent are observed to work well without incoherence regularizaiton, even globally. Examining the incoherence property of any second-order stationary point of $\g$, or a fixed point of PGD in the full-dimensional space, is therefore an interesting direction which we leave for future study.

\subsection{Matrix completion}\label{sec:matrix_completion}

Next consider the matrix completion minimization problem (\citep{candes2009exact,negahban2012restricted}) where we are given an unknown low-rank matrix $\Xs\in\R^{m\times n}$ while only a subset $\Omega\subset [m]\times [n]$ of entries are observed. Here we assume each entry $(i,j)\in\Omega$ of $\Xs$ is observed independently with probability $p$. Writing $\Pr{\Omega}{X}$ to denote the matrix whose entries are set to $0$ on $\Omega^c$, i.e., $(\Pr{\Omega}{X})_{ij}=X_{ij}\cdot \ones_{(i,j)\in\Omega}$, we solve the following minimization problem
\begin{equation}\label{eqn:matrix_completion_problem} \min_{X}\left\{ \f(X)=\frac{1}{2p}\fronorm{\Pr{\Omega}{X-\Xs}}^2: \rank(X)\leq r\right\}. \end{equation}
As in the case of robust PCA, the matrix completion problem is ill-posed without any incoherence type of conditions on the true matrix---indeed, if $\Xs$ is sparse, $\Pr{\Omega}{\Xs}$ is likely to  be a zero matrix and the optimization probem~\eqnref{matrix_completion_problem} owns a trivial solution which will be far from $\Xs$. To prevent this pathological case and allow reliable estimation, we therefore focus on recovering the incoherent matrix, as defined in~\eqnref{mu_incoherence}~\citep{candes2009exact}. 

Recent results have verified that  the matrix completion objective function is locally well-behaved near the true matrix if it satisfies the  incoherence condition. Specifically, if the matrix is initialized within $\mathcal{O}(\sigma_r(\Xs))$-neighborhood of $\Xs$, then with high probability the factorized objective $\g(A,B)=\f(AB^\top)$ satisfies restricted strong convexity and restricted smoothness type of conditions on the space of factored matrices $(A,B)$~\citep{chen2015fast,Zheng2016Convergence,ge2017no,ma2017implicit} (here randomness arises from the sampling operator $\Omega$). Adapting  this result to the original function $\f$, and  combining with~\thmref{local_opt}, we can characterize the basin of attraction for matrix completion model.
\begin{lemma}\label{lem:matrix_completion_local}
	Consider the matrix completion problem~\eqnref{matrix_completion_problem}. Suppose that the rank-r matrix $\Xs$ is $\mu$-incoherent~\eqnref{mu_incoherence}. Let $\kappa(\Xs)=\sigma_1(\Xs)/\sigma_r(\Xs)$, and suppose the sampling probability $p\geq c_1\frac{\mu^2 r^2\kappa^4(\Xs)(m+n)\log(m+n)}{mn}$ for  $c_1>0$. Define the local region around $\Xs$ given by
	\begin{multline*} \mathcal{N}(\Xs) = \left\{X\in\R^{m\times n}_{\rank(r)}: \fronorm{X-\Xs}\leq 0.1\kappa^{-1}(\Xs)\sigma_r(\Xs), \text{ and } \right.\\\left.  \hspace{0in}X=AB^\top \textnormal{ where $(A,B)$ satisfies~\eqnref{mu_incoherence2}}\right\} . \nonumber
	\end{multline*}
	Then the following holds with probability larger than $1-\mathcal{O}(\min\{m,n\}^{-3})$: for any second-order stationary point $(A,B)$ of the factorized objective $\g$, if $X=AB^\top \in \mathcal{N}(\Xs)$, 
	then $X=\Xs$. In other words, $\Xs=\As\Bs^\top$ is the unique ``incoherent'' second-order stationary point of $\g$ in the region $\mathcal{N}(\Xs)$.
\end{lemma}
\noindent The proof is given in~\appref{proofs}.

The initialization condition, i.e. the condition $\fronorm{X-\Xs}\leq 0.1\kappa^{-1}(\Xs)\sigma_r(\Xs)$ in $\mathcal{N}(\Xs)$, can typically be achieved via spectral initialization, or relaxing the rank constraint to a convex constraint (such as nuclear-norm penalty) and solving the corresponding convex problem. Similarly to the case of robust PCA, the incoherence of the factored matrices $(A,B)$ can be achieved via explicit constraint/penalty, or in certain settings the incoherence is implicitly imposed via an iterative algorithm such as gradient descent on the factorized space.

\section{Discussion}
In this paper, we establish a  connection between the full-dimensional  approach and the factorized approach for solving nonconvex low-rank optimization problems. Our main result shows that any SOSP of the factorized problem must also be a fixed point of projected gradient descent algorithms on the original function, connecting naturally the optimization landscape of the unconstrained factorized approaches with the full-dimensional rank-constrained approaches. In particular, this allows us to obtain various types of established optimality results for PGD algorithms and factorized algorithms in a single framework. 
We also illustrate applications of our framework to certain low-rank estimation problems arising in matrix signal recovery, such as matrix sensing, matrix completion, and robust PCA.
Overall, our result provides a new perspective on understanding the optimization landscape of the factorized approaches.

While the present work only considers exact fixed points of PGD and exact SOSPs of the factorized problems, finding such points is practically challenging. Standard optimization techniques such as stochastic or perturbed gradient descent are known to converge to an approximate SOSP \citep{ge2015escaping,jin2017escape}. Characterizing equivalence between approximate fixed points for full-dimensional PGD versus factorized approaches is therefore of practical interest. Another interesting direction would be to establish similar results under additional constraints on the full matrix $X=AB^\top$ or on the factorized matrices $A$ and $B$.

	\subsection*{Acknowledgements}
W.H. was supported by the NSF via the TRIPODS program and by Berkeley Institute for Data Science. R.F.B.~was partially supported by the NSF via grant DMS--1654076 and by an Alfred
P.~Sloan fellowship.
	
	\bibliographystyle{plainnat}
	\bibliography{factorized}

\appendix

\section{Additional proofs}\label{app:proofs}

\begin{proof}[Proof of \thmref{restricted_opt_lowerbd}]
	
	Without loss of generality, take $m\leq n$. Define the matrices
	\[X_0 = \sum_{i=1}^r \ee{i}\ee{i}^\top\textnormal{ and }
	X_1 =  \sum_{i=r+1}^{r+r'} \ee{i}\ee{i}^\top,\]
	and
	\[M =\left(\begin{array}{cc}\mathbf{0}_{r\times r} & \mathbf{1}_{r\times (n-r)} \\ \mathbf{1}_{(m-r)\times r}&\mathbf{0}_{(m-r)\times(n-r)}\end{array}\right).\] 
	Writing $\circ$ to denote the elementwise product, we will consider the objective function
	\[\f(X) = - \beta\cdot  \inner{X_1}{X-X_0} + \frac{\alpha}{2} \cdot  \fronorm{X-X_0}^2 + \frac{\beta-\alpha}{2}\cdot \fronorm{M\circ (X-X_0)}^2,\]
	which clearly is $\alpha$-strongly convex and $\beta$-smooth (and therefore trivially satisfies $\alpha$-RSC and $\beta$-RSM).
	Define
	\[A_0 = \left(\begin{array}{c}\ident{r} \\ \mathbf{0}_{(m-r)\times r}\end{array}\right)\textnormal{ and }B_0 = \left(\begin{array}{c}\ident{r} \\ \mathbf{0}_{(m-r)\times r}\end{array}\right).\]
	Then $A_0B_0^\top = X_0$, and a trivial calculation verifies that
	\[\f(A_0B_0^\top) = \f(X_0) = 0 > \f\big(\kappa \cdot X_1\big) \geq \min_{\rank(Y)\leq r'}\f(Y).\]
	Therefore, $A_0B_0^\top$ does not satisfy restricted optimality relative to the rank $r'$. 
	
	Now it remains to be shown that the pair $(A_0,B_0)$ is a second-order stationary point of the factorized objective function $\g(A,B)$.
	We can trivially see that $\nabla\f(X_0)=\beta X_1$, and so $\nabla \f(X_0)^\top A_0 =0$
	and $\nabla\f(X_0) B_0=0$, verifying that $(A_0,B_0)$ satisfies the first-order conditions.
	Now we examine the second-order conditions. We need to prove that, for any pair of matrices $(A_1,B_1)$,
	the operator $\nabla^2\g(A_0,B_0)$ maps $(A_1,B_1)\times(A_1,B_1)$
	to a nonnegative value. Using our earlier calculation~\eqnref{second_deriv_g_psd} to derive $\nabla^2\g(A,B)$, 
	we can calculate 
	\begin{align}
		\notag&\nabla^2\g(A_0,B_0)\Big((A_1,B_1),(A_1,B_1)\Big)\\
		\notag&=2\inner{\nabla \f(X_0)}{A_1B_1^\top}+ \nabla^2\f(X_0)\Big(A_0B_1^\top + A_1B_0^\top, A_0B_1^\top + A_1B_0^\top\Big)\\
		\label{eqn:check_psd_step1}
		&=2\beta\cdot \inner{X_1}{A_1B_1^\top}+\alpha\cdot \fronorm{A_0B_1^\top + A_1B_0^\top}^2 + (\beta-\alpha)\cdot\fronorm{M\circ( A_0B_1^\top + A_1B_0^\top)}^2,
	\end{align}
	where the last step holds by definition of $\f$.
	Now we split the matrices $A_1$ and $B_1$ into block form, writing
	\[A_1 =\left(\begin{array}{c}A_1' \\ A_1''\end{array}\right), \quad B_1 =\left(\begin{array}{c}B_1' \\ B_1''\end{array}\right),\]
	where $A_1'$ and $B_1'$ contain the first $r$ rows of $A_1$ and of $B_1$, respectively.
	Then, plugging in the definitions of $X_1$, $A_0$, $B_0$, and $M$, the expression in~\eqnref{check_psd_step1} can be rewritten as
	\[2\beta\cdot \textnormal{trace}\left(A_1''B_1''{}^\top\right)+\alpha\cdot \Fronorm{\left(\begin{array}{cc}A_1' + B_1'{}^\top & B_1''{}^\top \\ A_1'' & \mathbf{0}_{(m-r)\times(n-r)}\end{array}\right)}^2 + (\beta-\alpha)\cdot\Fronorm{\left(\begin{array}{cc}\mathbf{0}_{r\times r} & B_1''{}^\top \\ A_1'' & \mathbf{0}_{(m-r)\times(n-r)}\end{array}\right)}^2.\]
	This is trivially lower-bounded by
	\[2\beta\cdot \textnormal{trace}\left(A_1''B_1''{}^\top\right)+\beta\cdot\fronorm{A_1''}^2 + \beta\cdot\fronorm{B_1''}^2.\]
	Using the fact that $\big|\textnormal{trace}(YZ) \big|\leq \fronorm{Y}\fronorm{Z}$ for all matrices $Y,Z$, this expression is clearly nonnegative. We have therefore proved that
	$\nabla^2\g(A_0,B_0)\succeq 0$, thus verifying that $(A_0,B_0)$ is a SOSP and proving the desired result.
\end{proof}

\begin{proof}[Proof of~\lemref{localmin_vs_fixedpt}]
	First, we must have $\nabla\f(X)^\top U_X =0$ and $\nabla \f(X) V_X = 0$ since $X$ is a critical point of the rank-constrained
	minimization problem. Next, let $X=U_X\cdot \textnormal{diag}\{\sigma_1,\dots,\sigma_r\}\cdot V_X^\top$  be a SVD of $X$ and $u_\star\in\R^m$ and $v_\star\in\R^n$ be the top singular vectors of the gradient $\nabla\f(X)$.
	For $t\in [0,1]$, define 
	\[X_t = \sum_{i=1}^{r-1}\sigma_i u_{X,i}v_{X,i}^\top + \sigma_r \left[\sqrt{1-t^2} \cdot u_{X,r} + t u_\star \right]\left[\sqrt{1-t^2} \cdot v_{X,r} - t v_\star \right]^\top . \]
	Since $\nabla\f(X)^\top U_X = 0$ and $\nabla \f(X) V_X = 0$, some calculations yield
	\begin{equation*}
		\inner{\nabla\f(X)}{X_t - X } =\inner{\nabla \f(X)}{-t^2\sigma_r u_\star v_\star} =  -t^2\sigma_r \norm{\nabla\f(X)} = -\frac{1}{2\sigma_r}\norm{\nabla\f(X)}\fronorm{X_t-X}^2.
	\end{equation*}
	Now, if $\norm{\nabla\f(X)}>\beta_{\mathsf{local}}\cdot \sigma_r$, then we can find some small $\delta>0$ such that
	\begin{equation*}
		\inner{\nabla\f(X)}{X_t - X } < -\frac{\beta_{\mathsf{local}}+\delta}{2}\fronorm{X_t-X}^2
	\end{equation*}
	for all $t\in(0,1]$
	(note that this step uses the fact that $\fronorm{X_t - X}>0$ for all $t\neq 0$).
	
	On the other hand, by definition of $\beta_{\mathsf{local}}$~\eqnref{local_smth}, for sufficiently small $t_0> 0$ we have
	\[\f(X_t) \leq \f(X)+\inner{\nabla\f(X)}{X_t-X} + \frac{\beta_{\mathsf{local}}(X) + \delta}{2}\fronorm{X_t-X}^2 \]
	for all $0\leq t\leq t_0$.
	Combining these calculations, for all $t\in(0,t_0]$ we have
	\[\f(X_t)   < \f(X),\]
	which contradicts the assumption that $X$ is a local minimum.
\end{proof}

\begin{proof}[Proof of~\lemref{reg_obj}]
	In order for the proof of \thmref{main} to hold for this new setting, we need to verify that the 
	equations~\eqnref{first_deriv_g_zero} and~\eqnref{second_deriv_g_psd} both
	hold.
	
	By~\citet[Theorem 3, (16)--(18), and Remark 8]{zhu2018global},  for any pair $(A,B)$ for which $A^\top A = B^\top B$,
	the first derivative satisfies
	\begin{equation}\label{eqn:g_reg_1st_deriv}\nabla\g_{\mathsf{reg}}(A,B) = \nabla\g(A,B)\end{equation}
	while the second derivative $\nabla^2\g_{\mathsf{reg}}(A,B)$
	maps $(A_1,B_1)\times (A_1,B_1)$ to
	\begin{equation}\label{eqn:g_reg_2nd_deriv}\nabla^2\g(A,B)\Big((A_1,B_1),(A_1,B_1)\Big) + 4\lambda\fronorm{A^\top A_1 + A_1^\top A - B^\top B_1 - B_1^\top B}^2,\end{equation}
	and furthermore $A^\top A = B^\top B$ holds for any critical point $(A,B)$ of $\g_{\mathsf{reg}}$.
	Comparing to the proof of \thmref{main}, the first-derivative property therefore verify that~\eqnref{first_deriv_g_zero} holds,
	while the second-derivative property verifies that~\eqnref{second_deriv_g_psd}
	holds for any $(A_1,B_1)$ with $A^\top A_1 = 0$ and $B^\top B_1=0$.
	Now, following the proof of 
	\thmref{main}, for both the full-rank case and the rank-deficient case, we set $(A_1,B_1) = (-u_\star z^\top , 
	v_\star z'{}^\top )$ for some vectors $z,z'$, where $u_\star,v_\star$ are the top singular vectors of $\nabla\f(AB^\top)$ and, therefore, satisfy $u_\star \perp A$ and $v_\star \perp B$ 
	by~\eqnref{first_deriv_g_zero}. This means that we indeed have  $A^\top A_1 = 0$ and $B^\top B_1=0$,
	and so ~\eqnref{second_deriv_g_psd}
	holds for the relevant choice of $(A_1,B_1)$. This is sufficient for the proof of \thmref{main}(a) to yield the desired result for the regularized setting.
	To verify that \thmref{main}(b) holds in this setting, consider any $X$ that is a local minimum of $\min_{\rank(X)\leq r}\f(X)$.
	Define $A = U_X\cdot  \textnormal{diag}\{\sigma_1,\dots,\sigma_r\}^{1/2}$ and $B = V_X \cdot  \textnormal{diag}\{\sigma_1,\dots,\sigma_r\}^{1/2}$.
	Then clearly $A^\top A = B^\top B$. Since \thmref{main}(a) implies that $(A,B)$ is a SOSP of $\g$,
	we know that $\nabla\g(A,B)=0$ and $\nabla^2\g(A,B)\succeq 0$. Combined with~\eqnref{g_reg_1st_deriv} and~\eqnref{g_reg_2nd_deriv},
	this proves that $(A,B)$ is a SOSP of $\g_{\mathsf{reg}}$.

\end{proof}

\begin{proof}[Proof of \lemref{RSM_Hessian}]
	Define $Y_t = (A + t A_1)(B+ tB_1)^\top$ for $t>0$. Note that $\fronorm{X-Y_t}\rightarrow 0$ as $t\rightarrow 0$. By definition of $\beta_{\mathsf{local}}(X)$,
	\[\lim\sup_{t\rightarrow 0} \frac{\f(Y_t) - \f(X) - \inner{\nabla\f(X)}{Y_t - X}}{\frac{1}{2}\fronorm{X-Y_t}^2} \leq\beta_{\mathsf{local}}(X).\]
	Since $\f$ is twice-differentiable at $X$, we can also take a Taylor expansion to see that
	\[\lim\inf_{t\rightarrow 0} \frac{\f(Y_t) - \f(X) - \inner{\nabla\f(X)}{Y_t - X} - \frac{1}{2}\nabla^2\f(X)\big(Y_t - X,Y_t-X)}{\frac{1}{2}\fronorm{X-Y_t}^2} =  0.\]
	Combining these two, we see that
	\[\lim\sup_{t\rightarrow 0} \frac{\nabla^2\f(X)\big(Y_t - X,Y_t-X)}{\fronorm{X-Y_t}^2} \leq \beta_{\mathsf{local}}(X).\]
	Now we calculate this fraction. Since $Y_t - X = t\cdot (AB_1^\top + A_1B^\top) + t^2\cdot A_1B_1^\top$, we have
	\[\fronorm{X-Y_t}^2 = t^2 \fronorm{AB_1^\top + A_1B^\top}^2 + \mathcal{O}(t^3)\]
	and
	\[\nabla^2\f(X)\big(Y_t - X,Y_t-X) = t^2\nabla^2\f(X)\big(AB_1^\top + A_1B^\top,AB_1^\top + A_1B^\top\big) + \mathcal{O}(t^3),\]
	and therefore,
	\[\lim\sup_{t\rightarrow 0}\frac{\nabla^2\f(X)\big(Y_t - X,Y_t-X)}{\fronorm{X-Y_t}^2}  = \frac{\nabla^2\f(X)\big(AB_1^\top + A_1B^\top,AB_1^\top + A_1B^\top\big) }{ \fronorm{AB_1^\top + A_1B^\top}^2 },\]
	(as long as we are not in the degenerate case that $\fronorm{AB_1^\top + A_1B^\top}=0$---but if this were the case, then the result would hold trivially).
	Combining everything, we have proved the desired bound.\end{proof}

\begin{proof}[Proof of~\lemref{PGD_alphabeta}]
	By assumption, $X_0$ is a fixed point of PGD for some step size $\eta_0>0$, meaning that
	\[X_0 =  \pr{r}\big(X_0 - \eta_0 \nabla \f(X_0)\big). \]
	For the case  $\rank(X_0)=r$,
	then $X_0$ is a solution to the quadratic problem with rank constraint (by definition of projection), i.e.
	\[X_0 = \argmin_{\rank(X)\leq r} \fronorm{X_0-\eta_0\nabla\f(X_0) - X}^2, \]
	Then, in the case that $\rank(X_0)=r$, \citep[Lemma 7]{barber2017gradient} proves a first-order optimality condition for rank-constrained optimization:
	\[\inner{X_1 - X_0}{\nabla\f(X_0)} \geq -\frac{1}{2\sigma_r(X_0)}\norm{\nabla\f(X_0)}\fronorm{X_0 - X_1}^2.\]
	Combined with the $\alpha$-RSC assumption over $\mathcal{X}$, we see that
	\begin{multline}\label{eqn:fX_fXh_1}\f(X_1)\geq \f(X_0) + \inner{X_1 - X_0}{\nabla\f(X_0)} + \frac{\alpha}{2}\fronorm{X_0-X_1}^2 \\\geq \f(X_0) + \frac{1}{2}\left(\alpha - \frac{\norm{\nabla\f(X_0)}}{\sigma_r(X_0)}\right)\fronorm{X_0-X_1}^2.\end{multline}
	If instead $\rank(X_0)<r$ then $\nabla\f(X_0)=0$ by the conditions of a fixed point~\eqnref{PGD_SP}, and so
	\begin{equation}\label{eqn:fX_fXh_1_alt}\f(X_1)\geq \f(X_0) + \inner{X_1 - X_0}{\nabla\f(X_0)} + \frac{\alpha}{2}\fronorm{X_0-X_1}^2 \\= \f(X_0) + \frac{1}{2}\alpha\fronorm{X_0-X_1}^2.\end{equation}
	Applying the same arguments with the roles of $X_0$ and $X_1$ reversed yields
	\begin{equation}\label{eqn:fX_fXh_2}\f(X_0)\geq  \f(X_1) +  \frac{1}{2}\left(\alpha -\frac{\norm{\nabla\f(X_1)}}{\sigma_r(X_1)}\right)\fronorm{X_0-X_1}^2\end{equation}
	if $\rank(X_1)=r$, or
	\begin{equation}\label{eqn:fX_fXh_2_alt}\f(X_0)\geq  \f(X_1) +  \frac{1}{2}\alpha\fronorm{X_0-X_1}^2\end{equation}
	if $\rank(X_1)<r$.
	
	Now suppose $\rank(X_0)=\rank(X_1)=r$. Adding the two inequalities~\eqnref{fX_fXh_1} and~\eqnref{fX_fXh_2} yields
	\[0 \geq \frac{1}{2} \left(2\alpha  -  \frac{\norm{\nabla\f(X_0)}}{\sigma_r(X_0)}- \frac{\norm{\nabla\f(X_1)}}{\sigma_r(X_1)}\right)\fronorm{X_0-X_1}^2.\]
	This implies that either $X_0 = X_1$, or 
	\[\frac{\norm{\nabla\f(X_0)}}{\sigma_r(X_0)}+  \frac{\norm{\nabla\f(X_1)}}{\sigma_r(X_1)}\geq 2\alpha,\]
	as desired. If instead $\rank(X_0)=r$ and $\rank(X_1)<r$, then adding~\eqnref{fX_fXh_1} and~\eqnref{fX_fXh_2_alt} yields
	\[0 \geq \frac{1}{2} \left(2\alpha  -  \frac{\norm{\nabla\f(X_0)}}{\sigma_r(X_0)}\right)\fronorm{X_0-X_1}^2\]
	and therefore since $X_0\neq X_1$ we have
	\[\frac{\norm{\nabla\f(X_0)}}{\sigma_r(X_0)}\geq 2\alpha.\]
	If instead $\rank(X_0)<r$ and $\rank(X_1)=r$, this case is symmetric to the one above. Finally if $\rank(X_0)<r$ and $\rank(X_1)<r$,
	then adding~\eqnref{fX_fXh_1_alt} and~\eqnref{fX_fXh_2_alt} proves that we must have $X_0=X_1$ since $\alpha>0$.
\end{proof}

\begin{proof}[Proof of \lemref{globalmin_is_stationary}]
	This lemma is an easy consequence of~\lemref{localmin_vs_fixedpt}. First, since $\Xh$ is a global minimum, it is also a local minimum and so $\Xh$ satisfies the conditions~\eqnref{PGD_SP} with $\eta=1/\beta_{\textsf{local}}(\Xh)$. Since the set $\mathcal{X}$ is open relative to the set of low-rank matrices, it contains an intersection of a neighborhood of $\Xh$ and the set of low-rank matrices. Then comparing the definition of $\beta$-RSM with that of $\beta_{\textsf{local}}(\Xh)$~\eqnref{local_smth}, it follows that $\beta_{\textsf{local}}(\Xh)\leq \beta$, and in particular, $\Xh$ also satisfies  the conditions~\eqnref{PGD_SP} with $\eta=1/\beta$. This proves that $\Xh$ is a fixed point of PGD at step size $\eta\leq 1/\beta$.
\end{proof}

\begin{proof}[Proof of~\lemref{robust_PCA}]
	First we prove that the joint objective function $\f_{\mathsf{joint}}(X,S)$, defined in equation~\eqnref{robust_PCA_joint}, satisfies joint $\alpha$-RSC/$\beta$-RSM, that is 
	\[ \frac{1}{2}\fronorm{\Ds - (X+S)}^2 \geq \frac{\alpha}{2}\fronorm{X-\Xs}^2 + \frac{\alpha}{2} \fronorm{S-\Ss}^2,\]
	and similarly for $\beta$-RSM,
	over the set 
	\begin{equation}\label{eqn:robust_PCA_set}\{(X,S)\in\R^{m\times n}_{\rank(r)}\times \Sset: \textnormal{ $X=AB^\top$ where $(A,B)$ satisfies}~\eqnref{mu_incoherence2}\}.\end{equation} Given the data matrix $\Ds=\Xs+\Ss$, we have
	\begin{equation}\label{eqn:rpca1}  \frac{1}{2}\fronorm{\Ds - (X+S)}^2  = \frac{1}{2}\fronorm{X-\Xs}^2 + \frac{1}{2}\fronorm{S-\Ss}^2 +2\inner{X-\Xs}{S-\Ss}. \end{equation}
	To bound the term $\inner{X-\Xs}{S-\Ss}$, we closely follow~\citet[Corollary 6]{chen2015fast} and extend their result to the asymmetric and global case (their work assumes $X=AA^\top$ and verifies RSC locally on the factorized space). 
	Note that since $X=AB^\top$ for some $(A,B)$ satisfying $A^\top A = B^\top B$~\eqnref{mu_incoherence2}, it follows that $A=U\sqrt{\Sigma}R$ and $B=V\sqrt{\Sigma}R$ for some $R$, where $X=U\Sigma V^\top$ is a SVD of $X$ and $R$ is a rotation matrix, i.e. $R^\top R=\ident{r}$.
	Without loss of generality, we assume $R$ is chosen to be the best transformation to $(\As,\Bs)$, i.e. 
	\[ R\in\argmin_{\widetilde{R}\in\R^{r\times r}}\{\fronorm{(\widetilde{A}^\top,\widetilde{B}^\top)^\top \widetilde{R}- (\As^\top,\Bs^\top)^\top}:\widetilde{A}=U\sqrt{\Sigma},\widetilde{B}=V\sqrt{\Sigma}, \widetilde{R}^\top \widetilde{R} = \ident{r} \}.\]
	Writing $X-\Xs = A(B-\Bs)^\top + (A-\As)\Bs^\top$, then:
	\begin{align*}
		&| \inner{X-\Xs}{S-\Ss}| = |\inner{B^\top-\Bs^\top}{A^\top (S-\Ss)}| + |\inner{A^\top-\As^\top}{\Bs^\top(S-\Ss)}|\\
		&\leq \fronorm{B-\Bs}\fronorm{A^\top (S-\Ss)} + \fronorm{A-\As}\fronorm{\Bs^\top(S-\Ss)} \\
		&\leq  \fronorm{B-\Bs}\sqrt{\sum_{j=1}^n \norm{A^\top (S-\Ss)e_j}_2^2} + \fronorm{A-\As}\sqrt{\sum_{j=1}^n \norm{\Bs^\top (S-\Ss)e_j}_2^2}\\
		&\leq \fronorm{B-\Bs}\sqrt{\sum_{j=1}^n \norm{A}_{2,\infty}^2\norm{(S-\Ss)e_j}_1^2} + \fronorm{A-\As}\sqrt{\sum_{j=1}^n \norm{\Bs}^2_{2,\infty} \norm{(S-\Ss)e_j}_1^2},
	\end{align*}
	where $e_j$ denotes the $j$-th standard basis vector. Since $X$ and $\Xs$ are both $\mu$-incoherent~\eqnref{mu_incoherence2}, we further have $\norm{A}_{2,\infty}\leq c_2\sqrt{\frac{\sigma_1(\Xs)\mu r}{m}}$ and $\norm{\Bs}_{2,\infty} \leq  c_2\sqrt{\frac{\sigma_1(\Xs)\mu r}{n}}$, while for each column of $S$, we have $\norm{Se_j}_0\leq \norm{\Ss e_j}_0=s$ and thus $\norm{(S-\Ss)e_j}_1\leq \sqrt{2s}\norm{(S-\Ss)e_j}_2$. Putting  these bounds together, we have
	\[ | \inner{X-\Xs}{S-\Ss}|  \leq c_2\sqrt{\frac{2\sigma_1(\Xs)\mu r s}{\min\{m,n\}}}\fronorm{S-\Ss}(\fronorm{A-\As} + \fronorm{B-\Bs}). \]
	From~\citet[Lemma 5.4]{tu2015low} and~\citet[Lemma 4]{Zheng2016Convergence}, we know that $\sqrt{\sigma_r(\Xs)}(\fronorm{A-\As} + \fronorm{B-\Bs})\leq 2\fronorm{X-\Xs}$. Plugging into the inequality above, 
	\begin{align*}
		| \inner{X-\Xs}{S-\Ss}| &\leq 2c_2\sqrt{\frac{2\kappa(\Xs) \mu r s}{\min\{m,n\}}}\fronorm{S-\Ss}\fronorm{X-\Xs} \\
		&\leq \frac{c_1}{2}\fronorm{X-\Xs}^2 + \frac{c_1}{2}\fronorm{S-\Ss}^2,
	\end{align*} 
	where the second step uses the assumption $2c_2\sqrt{\frac{2\kappa(\Xs) \mu rs}{\min\{m,n\}}}\leq c_1$, together with the identity $ab\leq \frac{a^2+b^2}{2}$. Combining with~\eqnref{rpca1}, we have proved the joint restricted strong convexity and restricted smoothness conditions over the set~\eqnref{robust_PCA_set}, with $\alpha=1-c_1, \beta=1+c_1$.
	
	Next we give a brief outline on extending the result of first part of~\thmref{global_opt} to ensure the uniqueness of the fixed point $(X,S(X))$ of PGD on the joint problem~\eqnref{robust_PCA_joint}. Note that, by definition of the fixed point, we have (with step sizes $\eta=1$)
	\[\begin{cases}X =\Proj_r \big(X-\nabla_X\f_{\mathsf{joint}}(X,S(X)) \big),\\ S(X) = \Proj_{\Sset}\big(S(X)-\nabla_S\f_{\mathsf{joint}}(X,S(X)) \big) .\end{cases} \]
	Since $\norm{\Ss{}_j}_0\leq s$ for each column of $\Ss$, by definition of projection operator this implies that
	\[ \fronorm{S(X)_j -(S(X)_j-\nabla_{S_j}\f_{\mathsf{joint}}(X,S(X))) }^2\leq \fronorm{\Ss{}_j-(S(X)_j-\nabla_{S_j}\f_{\mathsf{joint}}(X,S(X)))}^2. \]
	Rearranging terms, and combining across columns $j=1,\ldots,n$, we obtain
	\[\inner{\Ss-S(X)}{\nabla_S\f_{\mathsf{joint}}(X,S(X)) }\geq -\frac{1}{2}\fronorm{\Ss-S(X)}^2. \]
	Turning to $X$, in the case that $\rank(X)=r$, by~\citet[Lemma 7]{barber2017gradient},  we know that 
	\[\inner{\Xs-X}{\nabla_X\f_{\mathsf{joint}}(X,S(X))}\geq -\frac{1}{2\sigma_r(X)}\norm{\nabla_X\f_{\mathsf{joint}}(X,S(X))}\fronorm{\Xs-X}^2. \]
	Instead, if $\rank(X)<r$, by condition~\eqnref{PGD_SP}, $\nabla_X\f_{\mathsf{joint}}(X,S(X))=0$. Following the same arguement as in the proof of~\lemref{PGD_alphabeta}, then  (with $\alpha=1-c_1$ and $\nabla\f_{\mathsf{joint}}(\Xs,\Ss)=0$),
	\begin{equation}\label{eqn:joint_full_rank}0\geq \frac{1}{2}\left(2(1-c_1)-\frac{\norm{\nabla_X\f_{\mathsf{joint}}(X,S(X))}}{\sigma_r(X)}\right)\fronorm{X-\Xs}^2 +  \left(\frac{1-2c_1}{2} \right)\fronorm{S(X)-\Ss}^2,\end{equation}
	if $\rank(X)=r$, or
	\begin{equation}\label{eqn:joint_rank_deficient}0\geq (1-c_1)\fronorm{X-\Xs}^2 +  \left(\frac{1-2c_1}{2} \right)\fronorm{S(X)-\Ss}^2,\end{equation}
	if $\rank(X)<r$.
	
	Now suppose that $\rank(X)=r$. For $c_1>0$ sufficiently small, the second term on the right-hand side of~\eqnref{joint_full_rank} is non-negative. This implies that either $X=\Xs$ and $S(X)=\Ss$, or 
	\[2(1-c_1)\leq \frac{\norm{\nabla_X\f_{\mathsf{joint}}(X,S(X))}}{\sigma_r(X)} \leq \frac{1}{\eta}=1,\]
	where the last step is by condition~\eqnref{PGD_SP}---
	but this cannot hold for $c_1>0$ sufficiently small. If instead $\rank(X)<r$, then since $c_1$ is small the inequality~\eqnref{joint_rank_deficient} yields $X=\Xs$ and $S(X)=\Ss$ which is a contradiction since $\Xs$ is full-rank.
	Therefore it follows that $\rank(X)=r$ and $X=\Xs$ and $S(X)=\Ss$,  proving the lemma.
\end{proof}

\begin{proof}[Proof of~\lemref{matrix_completion_local}]
	Let $(A,B)$ be a SOSP of $\g$ with $X=AB^\top \in \mathcal{N}(\Xs)$ and let $\Xs=\As\Bs^\top$ with $\As=\Us\sqrt{\Sigma_\star}$ and $\Bs=\Vs\sqrt{\Sigma_\star}$ where $\Xs=\Us\Sigma_\star\Vs^\top$ is a SVD of $\Xs$.
	For $(A,B)$, let $R\in\R^{r\times r}$ be the best orthogonal rotation matrix to $(\As,\Bs)$, i.e. $R$ is the solution to $\min_{R^\top R=\ident{r}} \fronorm{(A^\top,B^\top)^\top R - (\As^\top,\Bs^\top)^\top}$.
	
	Let $\mathcal{X}=\{X,\Xs\}$. Now to apply our~\thmref{local_opt} to this setting,  we  need to verify that $\f$ satisfies $\alpha$-RSC over $\mathcal{X}$, and that $\norm{\nabla\f(X)}<2\alpha \cdot \sigma_r(X)$  (note $\nabla\f(\Xs)=0$ in our case; $X$ cannot be rank-deficient since $\Xs$ is full-rank and $\fronorm{X-\Xs}\leq 0.1\kappa^{-1}(\Xs)\sigma_r(\Xs)$, see equation~\eqnref{X_sigma_r} below). 
	First, writing $\Delta_A=AR-\As$ and $\Delta_B=BR-\Bs$, we have the decomposition $X-\Xs=\As\Delta_B^\top +\Delta_A\Bs^\top +\Delta_A\Delta_B^\top $. Then, by the work of~\citet[Proof of Lemma 21]{ge2017no}, if $\fronorm{\Delta_A}^2 + \fronorm{\Delta_B}^2\leq \sigma_r(\Xs)/40$, then by our choice of $p$, we have with high probability
	\begin{align*}
		\frac{1}{2p}\fronorm{\Pr{\Omega}{X-\Xs}}^2 &= \frac{1}{2p}\fronorm{\Pr{\Omega}{\As\Delta_B^\top + \Delta_A\Bs^\top + \Delta_A\Delta_B^\top }}^2 \\
		&\geq \frac{1}{4}\sigma_r(\Xs)(\fronorm{\Delta_A}^2 + \fronorm{\Delta_B}^2).
	\end{align*} 
	Similarly, we can prove that with high probability
	\[\frac{1}{2p}\fronorm{\Pr{\Omega}{X-\Xs}}^2  \leq \frac{3}{2}\sigma_1(\Xs)(\fronorm{\Delta_A}^2 + \fronorm{\Delta_B}^2).\]
	Furthermore, by~\citet[Lemma 5.3]{tu2015low}, we can deduce that $0.35\sigma_1^{-1}(\Xs)\fronorm{X-\Xs}^2\leq\fronorm{\Delta_A}^2 + \fronorm{\Delta_B}^2$ while by~\citet[Lemma 5.4]{tu2015low} and~\citet[Lemma 4]{Zheng2016Convergence}, together with~\eqnref{mu_incoherence2} that $A^\top A = B^\top B$, we have $\fronorm{\Delta_A}^2 + \fronorm{\Delta_B}^2\leq 2.5 \sigma_r^{-1}(\Xs)\fronorm{X-\Xs}^2$. Putting everything together, it follows that
	\[0.08\kappa^{-1}(\Xs) \fronorm{X-\Xs}^2 \leq  \frac{1}{2p}\fronorm{\Pr{\Omega}{X-\Xs}}^2 \leq 3.75\kappa(\Xs)\fronorm{X-\Xs}^2, \]
	whenever $\fronorm{X-\Xs}^2\leq 0.01\sigma_r^2(\Xs)$. In particular this proves restricted strong convexity over $\mathcal{X}=\{X,\Xs\}$, with $\alpha=0.08\kappa^{-1}(\Xs)$.
	
	Next, to show  $\norm{\nabla\f(X)}<2\alpha \cdot \sigma_r(X)$, it suffices to show
	\begin{equation}\label{eqn:goal}
		\frac{1}{p}\norm{\Pr{\Omega}{X-\Xs}} < \frac{0.16\sigma_r(X)}{\kappa(\Xs)}.
	\end{equation}
	We denote $\tPr{\Omega}{L} = \frac{1}{p}\Pr{\Omega}{L} - L$. Then we split the left hand side of equation~\eqnref{goal} into two terms
	\[\frac{1}{p}\norm{\Pr{\Omega}{X-\Xs}} \leq \norm{X-\Xs} +\norm{\tPr{\Omega}{X-\Xs}}.\]
	The first term is bounded by our assumption as $\norm{X-\Xs}\leq 0.1\kappa^{-1}(\Xs)\sigma_r(\Xs)$. The second term is upper bounded by
	\begin{align*}
		\norm{\tPr{\Omega}{X-\Xs}} &\leq \norm{\tPr{\Omega}{\Delta_A (BR)^\top}} + \norm{\tPr{\Omega}{\As\Delta_B^\top}}\\
		&\leq \norm{\tPr{\Omega}{11^\top}} \norm{\Delta_A}_{2,\infty} \norm{BR}_{2,\infty} + \norm{\tPr{\Omega}{11^\top}} \norm{\As}_{2,\infty} \norm{\Delta_B}_{2,\infty}\\
		&\lesssim \frac{\sqrt{m+n}}{\sqrt{p}} \norm{\Delta_A}_{2,\infty} \norm{BR}_{2,\infty} + \frac{\sqrt{m+n}}{\sqrt{p}}\norm{\As}_{2,\infty} \norm{\Delta_B}_{2,\infty}\\
		&\leq c_3\frac{\sqrt{m+n}}{\sqrt{p}}\frac{ \sigma_1(\Xs) \mu r}{\sqrt{mn}}  \leq 0.04 \kappa^{-1}(\Xs)\sigma_r(\Xs),
	\end{align*}
	with high probability. Here the first step uses the identity $X-\Xs=\Delta_A (BR)^\top + \As \Delta_B^\top$ together with triangle inequality; the  second and the third steps are respectively due to~\citet[Lemma 4.5]{chen2017memory} and~\citet[Lemma 3.2]{keshavan2010matrix}; the fourth step uses the fact that $\norm{BR}_{2,\infty}=\norm{B}_{2,\infty}$ for orthogonal matrix $R$, as well as the incoherence assumption~\eqnref{mu_incoherence2}; and the last step holds since $pmn\geq \mathcal{O}\left(\mu^2 r^2\kappa^4(\Xs) (m+n)\log (m+n) \right)$.  The right hand side of~\eqnref{goal} is lower bounded as follows:
	\begin{equation}\label{eqn:X_sigma_r}\sigma_r(X) \geq \sigma_r(\Xs) - \norm{X-\Xs}\geq 0.9\sigma_r(\Xs).\end{equation}
	Combining the above bounds, it is straightforward to see that \eqnref{goal} holds. Finally, applying~\thmref{local_opt} proves the desired result.
	
\end{proof}

\section{Nondegenerate case for robust PCA}\label{app:rpca}
Consider the robust PCA problem given in~\secref{rpca}, and we additionally assume that the sparse component $\Ss$ has bounded entries, i.e., $\norm{\Ss}_\infty\leq c\frac{\mu r \sigma_1(\Xs)}{\sqrt{mn}}$ for some constant $c>0$. As pointed out by~\citet{ge2017no}, this requirement is not without loss of generality, because any $\mu$-incoherent matrix $\Xs$ has maximum entries bounded by $\frac{\mu r \sigma_1(\Xs)}{\sqrt{mn}}$. Here we consider the following sparsity constraint set, 
\[\bar{\Sset}=\{S\in\R^{m\times n}: \norm{S_{j}}_0\leq  \norm{\Ss{}_{j}}_0=s \textnormal{ and } \norm{S}_\infty\leq \frac{c\mu r\sigma_1(\Xs)}{\sqrt{mn}} \}. \]
Then the following statement holds: if $m\geq \mathcal{O}(s\cdot \mu^2 r^2\kappa^2)$,
\begin{equation*}\label{eqn:sval_lemma}\textnormal{For any $S\in\bar{\Sset}$, $\sigma_{r+1}(\Ds - S) < \sigma_r(\Ds-S)$.}\end{equation*}
To see why, we first bound $\norm{S-\Ss}$. Writing $\Delta_S = S-\Ss$, we can calculate
\begin{multline*}
	\norm{\Delta_S} = \sup_{\norm{x}_2=\norm{y}_2=1}x^\top \Delta_S y =  \sup_{\norm{x}_2=\norm{y}_2=1}\sum_{j=1}^n y_j \cdot \Delta_{S,j}^\top x \leq \sup_{\norm{x}_2=\norm{y}_2=1} \sum_{j=1}^n|y_j|\norm{\Delta_{S,j}}_2\norm{x}_2\\
	\leq\sup_{\norm{y}_2=1} \norm{y}_2 \sqrt{ \sum_{j=1}^n\norm{\Delta_{S,j}}_2^2} \leq \sqrt{\frac{s}{m}}\cdot c_2\mu r\sigma_1(\Xs),
\end{multline*}
where $\Delta_{S,j}$ denotes the $j$-th column of $\Delta_S$, and the last step holds since $\norm{\Delta_{S,j}}_2\leq \sqrt{\frac{s\cdot c_2^2 \mu^2 r^2 \sigma_1^2(\Xs)}{mn}}$ for some $c_2>0$. Then if $m\gtrsim s\cdot \mu^2r^2\kappa^2$, we can find $c_3>0$ such that $\norm{\Delta_S}\leq c_3\sigma_r(\Xs)$. Therefore, given the data matrix $\Ds = \Xs +\Ss$, we get
\[ \sigma_r(\Ds -S)\geq \sigma_r(\Xs) - \norm{S-\Ss}\geq (1-c_3)\sigma_r(\Xs) \textnormal{ and } \sigma_{r+1}(\Ds-S)\leq c_3\sigma(\Xs). \]
Taking $c_3<1/2$ then proves the desired result.

\end{document}